\newtheorem{theorem}{Theorem}[section]
\newtheorem{corollary}[theorem]{Corollary}
\newtheorem{lemma}[theorem]{Lemma}
\newtheorem{proposition}[theorem]{Proposition}
\theoremstyle{definition}
\newtheorem{definition}[theorem]{Definition}
\newtheorem{remark}[theorem]{Remark}
\newtheorem{example}[theorem]{Example}
\newtheorem{defProp}[theorem]{Def./Proposition}
\numberwithin{equation}{section}
\DeclareMathOperator{\add}{\mathsf{add}}
\DeclareMathOperator{\rad}{\mathsf{rad}}
\DeclareMathOperator{\rk}{\mathrm{rk}}
\DeclareMathOperator{\cok}{\mathrm{cok}}
\renewcommand{\ker}{\mathsf{ker}}
\newcommand{\im}{\mathrm{im}}
\newcommand{\can}{\mathsf{can}}
\newcommand{\incl}{\mathsf{incl}}
\newcommand{\op}{\mathsf{op}}
\newcommand{\id}{\mathrm{id}}
\DeclareMathOperator{\ev}{\mathsf{ev}}
\DeclareMathOperator{\Coh}{\mathsf{Coh}}
\DeclareMathOperator{\Tor}{\mathsf{Tor}}
\newcommand{\lmod}{\mathsf{-mod}}
\newcommand{\rmod}{\mathsf{mod-}}
\DeclareMathOperator{\lcm}{\mathrm{lcm}}
\DeclareMathOperator{\Gen}{\mathsf{Gen}}
\DeclareMathOperator{\Hom}{\mathsf{Hom}}
\DeclareMathOperator{\Ext}{\mathsf{Ext}}
\DeclareMathOperator{\pdim}{\mathsf{pdim}}
\DeclareMathOperator{\tp}{\mathsf{top}}
\DeclareMathOperator{\End}{\mathsf{End}}
\def\bop{\bigoplus}
\newcommand{\CC}{\mathbb{C}}
\newcommand{\RR}{\mathbb{R}}
\newcommand{\ZZ}{\mathbb{Z}}
\def\bop{\bigoplus}
\newcommand{\XX}{\mathbb{X}}
\renewcommand{\SS}{\mathbb{S}}
\newcommand{\PP}{\mathbbm{P}}
\newcommand{\kC}{\mathcal{C}}
\newcommand{\kD}{\mathcal{D}}
\newcommand{\kH}{\mathcal{H}}
\newcommand{\kI}{\mathcal{I}}
\newcommand{\kP}{\mathcal{P}}
\newcommand{\kR}{\mathcal{R}}
\newcommand{\kK}{\mathcal{K}}
\newcommand{\kT}{\mathcal{T}}
\newcommand{\kY}{\mathcal{Y}}
\def\sD{\mathsf D}
\def\sH{\mathsf H}
\def\sK{\mathsf K}
\def\kron#1#2{\xymatrix@C=2em{{#1}\ar@/^3pt/[r]\ar@/_3pt/[r]&{#2}}}
\tikzset{
  decorate with/.style={decorate,decoration={shape backgrounds,shape=#1,shape size=1.5mm}},
   deco/.style={decorate with=dart},
   ordi/.style={draw,-stealth,  thick},
   conj/.style={dashed, draw, thick},
   ve/.style={circle, draw, thick, fill=blue!20, inner sep=1pt, outer sep=2pt, minimum size=7pt},
    dot/.style={fill=blue!10,circle,draw, inner sep=1pt, minimum size=5pt},
  dv/.style={star,star points=5,
star point ratio=2, draw, thick, fill=green!20, inner sep=1pt,outer sep=2pt,minimum size=7pt}
}
\tikzset{
    tbl5 nodes/.style={
        rectangle,
        execute at begin node=$,
       execute at end node=$,
       fill=blue!5,
        align=center,
        text depth=0.5ex,
        text height=2ex,
        inner xsep=0pt,
        outer sep=0pt,
           },
    tbl5/.style={
        matrix of nodes,
        row sep=-\pgflinewidth,
        column sep=-\pgflinewidth,
        nodes={
            tbl5 nodes
        },
        execute at empty cell={\node[draw=none]{};}
    }
  }
\title[Coxeter-Dynkin algebras of canonical type]{Coxeter-Dynkin algebras of canonical type}
\author{Daniel Perniok}
\address{
Paderborn University
}
\email{dperniok@math.uni-paderborn.de}
\begin{document}

\begin{abstract}
We propose a definition of Coxeter-Dynkin algebras of canonical type
generalising the definition as a path algebra of a quiver as it is given for instance in  \cite{LenzingdelaPenaExtendedCanonical}. Moreover, we construct two tilting objects over the squid algebra \cite{Ringel} -- one via generalised APR-tilting and one via one-point-extensions and reflection functors -- and identify their endomorphism algebras with the Coxeter-Dynkin algebra. This shows that our definition gives another representative in the derived equivalence class of the squid algebra, and hence of the corresponding canonical algebra.
Finally, we have a closer look at the Grothendieck group and the Euler form which illustrates the connection to Saito's classification of marked extended affine root systems \cite{SaitoI}. On the other hand, this enables us to prove that in the domestic case Coxeter-Dynkin algebras are of finite representation type.
\end{abstract}

\maketitle

\tableofcontents

\section*{Introduction}
\renewcommand{\thefootnote}{}
\footnotetext{Date: December 2, 2025.}
\renewcommand{\thefootnote}{\arabic{footnote}}
It is well-known that indecomposable representations of a tame hereditary algebra split into three classes: the preprojective component, the preinjective component and the tubes (see e.~g.~\cite{RingelTameAlgebras} or \cite{SimsonSkowronski2}). Whereas the preprojective and the preinjective representations can be controlled on the level of dimension vectors, the representations that sit in the tubes are more difficult to understand. This led to the construction and study of more algebras that admit so-called \emph{stable separating tubular families}. In particular, Ringel constructed \emph{canonical algebras} from quivers with relations as examples of such algebras \cite{RingelTameAlgebras}.
A recurring question in this context concerns the parametrisation of the tubes. While in general this might be an arbitrarily hard number-theoretic problem, over an algebraically closed field the answer is known. The closed points of the projective line $\PP^1_k$ serve as a parameter set, which hints at a hidden geometry behind the tubular family, even in the general case. Over an algebraically closed field, this led to the definition of \emph{weighted projective lines} \cite{GeigleLenzing}. Geigle and Lenzing constructed a derived equivalence between the category $\Coh(\XX)$ of coherent sheaves on a weighted projective line $\XX$ and the module category of a canonical algebra in the sense of \cite{RingelTameAlgebras}.
\\
In order to develop the theory without assumptions on the base field, Ringel gave a sophisticated definition of canonical algebras generalising the previous one \cite{Ringel}. He also defined another algebra, called \emph{squid algebra}, over which he constructed a tilting module whose endomorphism algebra is a canonical algebra. Building on this, Lenzing and de la Pe$\tilde{\text{n}}$a achieved an alternative characterisation of \emph{concealed-canonical} algebras (i.~e.~those algebras which are derived equivalent to a canonical algebra via a special tilting module). They proved that these are precisely the algebras which admit a \emph{sincere separating tubular family} \cite{LenzingdelaPena}. Furthermore, they showed that this condition is equivalent to the existence of a certain hereditary category $\kH$ which -- over an algebraically closed field -- can be identified with $\Coh(\XX)$ in the sense of \cite{GeigleLenzing}.
\\
An axiomatic characterisation of these categories can be found in \cite{LenzingFDAlgebrasAndSingularityTheory}, where the concept of an \emph{exceptional curve} over $k$ is introduced via a list of conditions that an associated hereditary $k$-category $\kH=\Coh(\XX)$ is supposed to satisfy. Further investigations based on this approach were performed by Kussin, including several phenomena which are only visible over non-algebraically closed fields \cite{KussinMemoirs}.
\\
In a recent work, Burban provides a geometric description of the category $\kH$ by constructing -- up to Morita-equivalence -- a ringed space $\XX$, called \emph{exceptional hereditary curve}, which is the desired generalisation of Geigle-Lenzing's weighted projective lines \cite{BurbanExceptionalCurves}. He proves that $\Coh(\XX)$ admits a tilting object whose endomorphism algebra is a squid algebra which in turn is derived equivalent to the canonical algebra by \cite{Ringel}.
\\
Our goal is to look at yet another representative in the derived equivalence class of the canonical algebra and the squid algebra, the so-called \emph{Coxeter-Dynkin algebra of canonical type}. In \cite[§3.9]{LenzingdelaPenaExtendedCanonical} one can find a definition of Coxeter-Dynkin algebras as path algebras of quivers modulo relations, motivated by connections to singularity theory.
Moreover, related algebras and diagrams play a role in recent works on cluster categories coming from weighted projective lines \cite{FuGengClusterCategory} and cluster modular groups of cluster algebras \cite{GreenbergKaufman}.
A generalisation of the definition to all cases which can occur over non-algebraically closed fields is the first goal of this work. Moreover, we establish in two different ways that the Coxeter-Dynkin algebra $B$ is derived equivalent to the squid algebra $A$ and hence to the canonical algebra $C$. Firstly, we use generalised APR-tilting with the advantage that we obtain a classical tilting module over $A$ whose endomorphism algebra is isomorphic to $B$. Secondly, we use reflection functors for hereditary algebras and one-point-extensions to construct a tilting complex over $A$. The second approach has the advantage that it can offer a more intuitive and constructive viewpoint towards the definition of Coxeter-Dynkin algebras. Subsequently, we apply the idea of reflection functors and one-point-extensions to canonical algebras to give a detailed proof of Ringel's tilting theorem for the squid algebra and the canonical algebra and offer a new perspective on the definition of canonical algebras.
\\
The previously described equivalences are summarised in the following diagram:
\begin{align*}
\xymatrix
{
&&& \kD^b(\Coh(\XX)) \ar[d]^{\cong}_{\text{\cite{BurbanExceptionalCurves}}}
\ar@/_1pc/[dlll]^\cong_{\text{\cite{GeigleLenzing}},~ k=\bar{k}}
\ar@/^1pc/[drrr]_\cong^{\text{\quad\cite{LenzingdelaPenaExtendedCanonical}},~ k=\bar{k}}
&&& \\
\kD^b(\rmod C) &&& 
\kD^b(\rmod A) \ar[rrr]^\cong_{\text{Thm.~\ref{thm:tiltSquidToOctopus}~+~\ref{thm:tiltSquidToOctopus1ptext}}} \ar[lll]_\cong^{\text{\cite{Ringel}}} &&&
\kD^b(\rmod B)
}
\end{align*}
As a next step, we will discover that Coxeter-Dynkin algebras of canonical type illustrate the connection between the theory of canonical algebras and exceptional curves on one side and geometric group theory on the other side. It turns out that Coxeter-Dynkin algebras are closely related to the diagrams that Saito used to classify extended affine root systems of codimension $1$ and their associated elliptic Weyl groups \cite{SaitoI}. These connections recently have been studied in \cite{NonCrossingPartitions}. There, a poset isomorphism is established between the non-crossing partitions in the reflection group and the exceptional subcategories in the category of coherent sheaves over an exceptional hereditary curve.
\\
The final goal of this paper is to show that in the domestic case Coxeter-Dynkin algebras are tilted algebras of finite representation type, which is a new result even in the case of quivers with relations.
\\[+4pt]
\emph{Outline.} In Section \ref{section:squidAndCDalgebra}, we describe the initial data from which we start to define the algebras $A$, $B$ and $C$ consisting of a minimal model, exceptional points and weights. Then, we recall the definition of a squid algebra and prove the equivalence of several conditions under which we can define a reasonable notion of Coxeter-Dynkin algebra of canonical type. These equivalent conditions are closely related to the tilting statements which follow in sections \ref{section:squidToCDviageneralisedAPR} and \ref{section:CDalgebraViaOnePointExtension}. While the first tilting object in Section \ref{section:squidToCDviageneralisedAPR} arises from generalised APR-tilting, the second one in Section \ref{section:CDalgebraViaOnePointExtension} is based on a theorem of Barot and Lenzing concerning derived equivalences and one-point-extensions. In Section \ref{section:canonicalAlgebras}, we recall the definition of canonical algebras from \cite{Ringel} and reprove his tilting result about the squid and the canonical algebra from a different viewpoint. Section \ref{section:SymbolWeylGroup} is dedicated to the connection to Lenzing's study of canonical bilinear lattices \cite{LenzingKTheoreticStudy} and geometric group theory. We introduce the \emph{symbol} of a canonical algebra, explain how to extract it from the input data and how the corresponding trichotomy of canonical algebras looks like. Also this section serves as a preparation for Section \ref{section:RepresentationType} where we prove finite representation type of $B$ in the domestic case.  
\\[+4pt]
\emph{Notation.} Let $k$ be an arbitrary field. If $R$ is a finite-dimensional $k$-algebra, then we write $\rmod R$ (resp.~$R\lmod$)
for the category of finitely generated  
right (resp.~left) $R$-modules. We will frequently use notions of $k$-species and their tensor algebras (or as a special case quivers and their path algebras) and follow the convention of \cite{DlabRingel}. In particular, in the path algebra of a quiver, arrows are composed from left to right (hence opposite to the way how we compose functions). We write $\kD^b(\rmod R)$ for the bounded derived category of $\rmod R$ and $\kK^b(\mathsf{proj-}R)$ for the homotopy category of bounded complexes of projective right $R$-modules. Moreover, $\sD(-)=\Hom_k(-,k)$ is our notation for the dual with respect to the base field $k$.
\\[+4pt]
\emph{Acknowledgement.} 
This work was funded by the Deutsche Forschungsgemeinschaft (DFG, German Research Foundation) – Project-ID 491392403 – TRR 358.
I would like to thank Igor Burban and Charly Schwabe for many fruitful discussions and helpful comments on this work.

\section{Squid algebras and Coxeter-Dynkin algebras of canonical type}\label{section:squidAndCDalgebra}

Let $k$ be an arbitrary field and $F,G$ finite-dimensional division algebras over $k$. Let $M={}_FM_G$ be a finite-dimensional $F$-$G$-bimodule such that $\lambda m= m\lambda$ for all $\lambda\in k, m\in M$. Then
\[
\Lambda = \begin{pmatrix}
F & {}_FM_G \\ 0 & G
\end{pmatrix}
\]
is a finite-dimensional hereditary $k$-algebra. We can assume without loss of generality that $k$ is the center of $\Lambda$. The bimodule $M$ is called \emph{tame}, if $\Lambda$ is a tame hereditary algebra, or equivalently if
\[
\dim{}_FM\cdot\dim M_G = 4.
\]
From now on we assume that $M$ is a tame bimodule. In this case the category of finite-dimensional right $\Lambda$-modules decomposes as
\begin{align}\label{eq:PRIdecomposition}
\rmod\Lambda = \kP \vee \kR \vee \kI
\end{align}
into so called \emph{preprojective} resp.~\emph{regular} resp.~\emph{preinjective} $\Lambda$-modules, where non-zero homomorphisms exist only from left to right. The regular modules are precisely those which are mapped to itself under the Auslander-Reiten translation $\tau$. Moreover the subcategory $\kR$ is abelian and consists of $\Hom$- and $\Ext$-orthogonal subcategories 
each of which being a uniserial category with a unique simple object up to isomorphism \cite{DlabRingel}.
We note that $\Lambda$ is the tensor algebra of the $k$-species
\begin{align}\label{eq:species}
\xymatrix
{
F \ar[rr]^{{}_FM_G} && G.
}
\end{align}
A representation of the species \eqref{eq:species} (or simply a representation of the bimodule ${}_FM_G$) is a triple $(U,V,\rho)$ where $U=U_F$ is a right $F$-module, $V=V_G$ is a right $G$-module and $\rho$ is a morphism of right $G$-modules
\[
\rho:~U_F\otimes_F{}_FM_G\longrightarrow V_G.
\]
Given two representations $(U,V,\rho)$ and $(U',V',\rho')$ of $M$, a homomorphism of representations consists of a pair of maps $(f,g)\in\Hom_F(U,U')\times\Hom_G(V,V')$ such that the diagram
\[
\xymatrix{
U\otimes_FM \ar[r]^-\rho \ar[d]_{f\otimes\id_M} & V \ar[d]^g \\
U'\otimes_FM \ar[r]^-{\rho'} & V'
}
\]
is commutative. The category of representations of the species \eqref{eq:species} is equivalent to the category $\rmod\Lambda$ of right $\Lambda$-modules.
More concretely, the $\Lambda$-module associated to a species representation $(U,V,\rho)$ is the $k$-vector space $U\oplus V$ with $\Lambda$-action
\[
(u,v) \cdot
\begin{pmatrix}
f & m \\ 0 & g
\end{pmatrix} 
= (uf, \rho(u\otimes m)+vg)\qquad \begin{array}{l} \forall f\in F, g\in G ,m\in M \\ \forall u\in U, v\in V. \end{array}
\]
We also note that the underlying (oriented) weighted graph of the species \eqref{eq:species} is
\begin{align}\label{eq:Epsilon}
\xymatrix
{
\bullet \ar[rr]^{\left(\frac{2}{\varepsilon},~2\varepsilon\right)} && \bullet
}
\qquad\quad\text{ with }\qquad \varepsilon = \sqrt{\frac{\dim{}_FM}{\dim M_G}}\in\left\{\frac{1}{2}\,,1\,,2\right\}.
\end{align}
It is then clear that the division algebras $F$ and $G$ satisfy
\begin{align}\label{eq:FGdimensions}
\dim_kG=\varepsilon^2\dim_kF.
\end{align}
From the representation theory of tame hereditary algebras we know in which cases an indecomposable species representation $(U,V,\rho)$ is preprojective, regular or preinjective depending on the dimension vector, namely
\begin{align}\label{eq:PRIdecompositionDimVector}
(U,V,\rho)\in 
\begin{cases}
\kP, & \text{ if } \dim U_F<\varepsilon\dim V_G, \\
\kR, & \text{ if } \dim U_F=\varepsilon\dim V_G, \\
\kI, & \text{ if } \dim U_F>\varepsilon\dim V_G.
\end{cases}
\end{align}
In order to define a squid algebra, we need the following input data:
\begin{itemize}
\item a tame bimodule $M={}_FM_G$ over finite-dimensional $k$-division algebras $F$ and $G$, called \emph{minimal model},
\item integers $t\geq 1$ and $p_1,\dots,p_t\geq 2$, called \emph{weights} and
\item pairwise non-isomorphic regular-simple representations
\[
\rho_i:~U_i\otimes_FM\longrightarrow V_i \qquad \text{ for } 1\leq i\leq t
\]
of the bimodule ${}_FM_G$, called \emph{exceptional points}.
\end{itemize}
A representation $(U_i,V_i,\rho_i)$ of M is called \emph{regular-simple} if it is a simple object in the abelian category of regular representations (since the category of representations of $M$ is equivalent to $\rmod\Lambda$, we can use the decomposition \eqref{eq:PRIdecomposition}). In particular, the endomorphism rings $D_i=\End(\rho_i)$ are finite-dimensional $k$-division algebras. For every $1\leq i\leq t$, we get a left $D_i$-action on $U_i$ and $V_i$ turning them into bimodules such that $\rho_i$ is a morphism of $D_i$-$G$-bimodules.

\begin{definition}
The \emph{squid algebra} associated to the tame bimodule ${}_FM_G$, weights $p_1,\dots,p_t$ and exceptional points $\rho_1,\dots,\rho_t$ is the finite-dimensional $k$-algebra
\begin{align*}
A :=
\left(
\begin{array}{ccc|c|ccc|cc}
D_1 & \cdots & D_1 & 0 & 0 & \cdots & 0 & U_1 & V_1 \\
& \ddots & \vdots & \vdots & \vdots &  & \vdots & \vdots & \vdots \\
&  & D_1 & 0 & 0 & \cdots & 0 & U_1 & V_1 \\
 \hline
 &  &  & \ddots & 0 & \cdots & 0 & \vdots & \vdots \\
 \hline
&  &  &  & D_t & \cdots & D_t & U_t & V_t \\
&  &  &  &  & \ddots & \vdots & \vdots & \vdots \\
&  &  &  &  &  & D_t & U_t & V_t \\
 \hline
 &  &  &  &  &  &  & F & M \\
 &  &  &  &  &  &  &  & G \\
\end{array}
\right),
\end{align*}
where $D_i$ appears precisely $p_i-1$ times on the diagonal. The multiplication in $A$ is given by the bimodule structures over the division algebras respectively by the $D_i$-$G$-bimodule homomorphisms $\rho_i:~U_i\otimes_F M\longrightarrow V_i$.
\end{definition}

\begin{remark}
The squid algebra is a quotient of the tensor algebra of the $k$-species
\begin{align}
\label{eq:speciesSquid}
\begin{minipage}{12cm}
\xymatrix@R=1pc
{
D_1 \ar[r]^{D_1}  & D_1 \ar[r]^{D_1} & \dots & \dots \ar[r]^{D_1} & D_1 \ar[rdd]^{U_1} &  &  \\
D_2 \ar[r]^{D_2}  & D_2 \ar[r]^{D_2} & \dots & \dots \ar[r]^{D_2} & D_2 \ar[rd]_{U_2} &  &  \\
  & \vdots &  & \vdots &  & F \ar[r]^M & G \\
  & \vdots &  & \vdots &  &  &  \\
D_t \ar[r]^{D_t}  & D_t \ar[r]^{D_t} & \dots & \dots \ar[r]^{D_t} & D_t \ar[ruu]_{U_t} &  &  \\
}
\end{minipage}
\end{align}
where the $i$-th arm has $p_i-1$ vertices.
\end{remark}

\begin{example}\label{example:simplyLacedSquid}
If we start with the tame $k$-$k$-bimodule $k^2$, then the exceptional points are regular-simple representations of the Kronecker quiver which are in one-to-one correspondence with irreducible polynomials in $k[X]$ or in other words with closed points of the projective line $\PP_k^1$. Suppose we only choose points $(1\!:\!\lambda_1),\dots,(1\!:\!\lambda_t)$ of degree one (meaning $\lambda_1,\dots,\lambda_t\in k\cup\{\infty\}$). Then, it is easy to see that the corresponding squid algebra $A$ is the path algebra of the quiver
\[
\xymatrix@R=1pc
{
\bullet \ar[r]  & \bullet \ar[r] & \dots & \dots  \ar[r] & \bullet \ar[rdd]^{a_1} & &   \\
\bullet \ar[r]  & \bullet \ar[r] & \dots & \dots  \ar[r] & \bullet \ar[rd]_{a_2} & &   \\
  & \vdots &  & \vdots &    & \bullet \ar@<0.8ex>[r]^x \ar@<-0.8ex>[r]_y & \bullet  \\
  & \vdots &  & \vdots &   & &   \\
\bullet \ar[r]  & \bullet \ar[r] & \dots & \dots  \ar[r] & \bullet \ar[ruu]_{a_t} & &   \\
}
\]
where the $i$-th arm has $p_i-1$ vertices, modulo the relations
\begin{align*}
a_i(x+\lambda_iy)=0\qquad\forall~ 1\leq i\leq t.
\end{align*}
Note that for an algebraically closed field $k$, the only finite-dimensional division algebra is the field $k$ itself, the only tame $k$-$k$-bimodule with central $k$-action is $k^2$ and every irreducible polynomial over $k$ has degree one. Therefore this example covers all possible cases over an algebraically closed field.
\end{example}
Before we continue, we need to fix some more notation. A complete set of primitive orthogonal idempotents in $A$ is given by those matrices with a unique entry $1$ on the diagonal and only zeroes elsewhere. We write
\[
e_1(p_1-1),\dots,e_1(1),\dots\quad\dots,e_t(p_t-1),\dots,e_t(1),e_F,e_G
\]
for the idempotent elements in $A$, meaning $e_i(j)$ is the matrix with entry $1_{D_i}$ at the $(p_i-j)$-th position in the $i$-th block and zeroes elsewhere. This notation corresponds to a labelling of the vertices in the $i$-th arm with number $p_i-1$ to $1$ going from left to right.

Our goal is to define another finite-dimensional algebra which -- as we will show in the next section -- is derived equivalent to the squid algebra $A$, the so called \emph{Coxeter-Dynkin algebra of canonical type}. In the simply-laced case, a definition as a path algebra of a quiver modulo relations can be found in \cite{LenzingdelaPenaExtendedCanonical} as well as in \cite{WhatYouDidntWantToKnow}. Moreover, both references contain a concrete description of a tilting object in the category $\Coh(\XX)$ of coherent sheaves on a weighted projective line whose endomorphism algebra is a Coxeter-Dynkin algebra. However, both the definition of the Coxeter-Dynkin algebra and the construction of the tilting object rely on the assumption that there are at least two exceptional points. In the general situation this condition has to be formulated differently. For this purpose we introduce the notation
\begin{align*}
U_i^\vee := \Hom_{D_i}\left({}_{D_i}{U_i}_F,{}_{D_i}{D_i}_{D_i}\right)
\end{align*}
for the $D_i$-dual of $U_i$, which is an $F$-$D_i$-bimodule. Moreover, we note that there are isomorphisms of $F$-$G$-bimodules
\begin{align*}
U_i^\vee\otimes_{D_i}V_i&\stackrel{\cong}{\longrightarrow}\Hom_{D_i}(U_i,V_i)\\
\varphi\otimes v &\longmapsto \varphi(-)v
\end{align*}
for all $1\leq i\leq t$.
Furthermore, we define the finite-dimensional $k$-algebra
\begin{align*}
A_0=e_0Ae_0\qquad\quad\text{with}\qquad\quad e_0=1-e_G \in A
\end{align*}
which is given by the tensor algebra of the species \eqref{eq:speciesSquid} with the rightmost vertex deleted. In particular, $A_0$ is hereditary and $A$ can be written as a one-point-extension
\begin{align*}
A=\begin{pmatrix}
A_0 & N \\ 0 & G
\end{pmatrix}
\end{align*}
with respect to the left $A_0$-module $N=e_0Ae_G\in A_0 \lmod$. We are now ready to formulate conditions under which a reasonable notion of Coxeter-Dynkin algebra can be defined. Later, we will encounter two further equivalent conditions in the context of canonical algebras (see Remark \ref{rmk:CanonicalAlgebraSpecies}) and the representation type (see Remark \ref{rmk:RepFiniteTiltedAlgebras}) which do not play a role at the moment.

\begin{proposition}\label{Prop:OctopusCondition}
The following conditions are equivalent:
\begin{enumerate}
\item[(1)] The morphism of $F$-$\,G$-bimodules
\begin{align*}
\theta_0:\quad{}_FM_G&\longrightarrow \bigoplus_{i=1}^t \Hom_{D_i}\left(U_i,V_i\right)\cong \bigoplus_{i=1}^t U_i^\vee\otimes_{D_i}V_i \\
m & \longmapsto \left(\rho_i(-\otimes m)\right)_{1\leq i\leq t}
\end{align*}
is a monomorphism.
\item[(2)] $\pdim(\tau^{-1}(\tp(e_FA)))=1$
\item[(3)] $\Hom_{\rmod A}(\sD({}_AA), \tp(e_FA))=0$
\item[(4)] $\Hom_{A_0\lmod}(\tp(A_0e_F), N)=0$
\item[(5)] The left $A_0$-module $N$ has no direct summand isomorphic to $\tp(A_0e_F)$.
\item[(6)] The following inequality is satisfied:
\begin{align*}
\sum_{i=1}^t\dim{}_{D_i}(U_i)\cdot\dim(U_i)_F=
\sum_{i=1}^t\dim{}_{D_i}(V_i)\cdot\dim(V_i)_G\geq 2
\end{align*}
\end{enumerate}
\end{proposition}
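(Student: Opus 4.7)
The plan is to link each of conditions (2)--(6) to condition (1) on the structural map $\theta_0$, which is the most convenient central node. The pair (4)--(5) concerns the left $A_0$-module $N$ directly, (3)--(2) are homological reformulations over $A$, and (6) is a dimension count backed by the classification of regular-simple representations of tame bimodules. Throughout I abbreviate $a_i := \dim_{D_i}(U_i)$ and $b_i := \dim(U_i)_F$, so that the first equality in (6) becomes $a_i b_i = c_i d_i$ with $c_i = \dim_{D_i}(V_i)$, $d_i = \dim(V_i)_G$.

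First I would observe that $\tp(A_0 e_F)$ is the simple left $A_0$-module $S_F$ supported at vertex $F$, and that a morphism $\phi \colon S_F \to N$ is determined by its $F$-component $\phi_F \colon F \to e_F N = M$. Compatibility with the actions of the arrows from $F$ to the arm vertices in the left-module structure on $N$ (realised by the maps $\rho_i(u \otimes -) \colon M \to V_i$ for $u \in U_i$) forces the image of $\phi_F$ to lie in $\ker \theta_0$, yielding $\Hom_{A_0}(S_F, N) \cong \ker \theta_0$ and hence $(1) \Leftrightarrow (4)$. For $(4) \Leftrightarrow (5)$, the direction $(4) \Rightarrow (5)$ is trivial; the converse uses that $F$ is a sink in the quiver of $A_0$, so any $A_0$-linear map $\psi \colon N \to S_F$ reduces to a left-$F$-linear functional $M \to F$ subject to no further arrow-compatibility condition. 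Since $M$ is a free left $F$-module (as $F$ is a division algebra), any embedding $S_F \hookrightarrow N$ splits, so $\Hom(S_F, N) \neq 0$ would produce an $S_F$-summand of $N$.

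For the homological cluster I would compute $\Hom_A(\sD(A), S_F^{\mathrm{right}})$ via $k$-duality: its dimension equals the multiplicity of $S_F^{\mathrm{left}}$ in $\mathrm{soc}({}_A A)$. Inspecting each $A e_v$ one sees that $S_F^{\mathrm{left}}$ can appear in $\mathrm{soc}(A e_v)$ only for $v = G$, where such simple submodules biject with the elements of $\ker \theta_0 \subseteq e_F A e_G = M$; this gives $(3) \Leftrightarrow (1)$. To establish $(1) \Leftrightarrow (2)$ I would construct an injective copresentation $0 \to S_F \to I_F \to I_F / S_F \to 0$ by dualising the isomorphism $\rad(A e_F) \cong \bigoplus_i (A e_{i(1)})^{\oplus a_i}$ of projective left $A$-modules (valid because $U_i$ is free of rank $a_i$ over the division algebra $D_i$); thus $I_F / S_F \cong \bigoplus_i I_{i(1)}^{\oplus a_i}$ is injective. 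Applying the inverse Nakayama functor $\nu^{-1}$ produces a presentation $P_F \to \bigoplus_i P_{i(1)}^{\oplus a_i} \to \tau^{-1}(S_F) \to 0$, and a direct computation (using $\Hom_A(P_F, P_{i(1)}) \cong U_i$ via Nakayama) identifies the $G$-component of this presentation map with $\theta_0$. Hence the map is injective precisely when $\theta_0$ is, giving $\pdim(\tau^{-1} S_F) = 1$; otherwise the kernel is concentrated at vertex $G$, hence isomorphic to a power of the projective $P_G = S_G$, and the minimal projective resolution has length exactly $2$.

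For $(1) \Leftrightarrow (6)$, a direct computation gives $\dim_k M = 2\varepsilon \dim_k F$ and $\dim_k \bigoplus_i \Hom_{D_i}(U_i, V_i) = \varepsilon \bigl(\sum_i a_i b_i\bigr) \dim_k F$, so $(1) \Rightarrow (6)$ is immediate. For the converse: if some $a_i b_i \geq 2$, then the adjoint $\tilde\rho_i \colon M \to \Hom_{D_i}(U_i, V_i)$ of the regular-simple $\rho_i$ is itself injective (a consequence of the classification of regular-simple representations on a tame bimodule), so $\theta_0$ is injective. If instead all $a_i b_i = 1$, then (6) forces $t \geq 2$; each kernel $\ker \tilde\rho_i$ is a sub-bimodule of $M$ of $k$-codimension $\varepsilon \dim_k F$, i.e.~of half the dimension of $M$, and the pairwise non-isomorphism of the $\rho_i$ produces distinct, transverse kernels, so $\bigcap_i \ker \tilde\rho_i = 0$. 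I expect the main technical obstacle to be the verification of this transversality over an arbitrary base field, which depends on the detailed structure theory of regular-simple representations of tame bimodules (as in Dlab--Ringel and Kussin); this is precisely where the assumption of pairwise non-isomorphism of the $\rho_i$ is genuinely used.
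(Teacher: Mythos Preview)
Your proposal is correct and the individual arguments go through. The organisation differs from the paper's: you route everything through condition~(1) as a hub, whereas the paper proves the chain $(1)\Leftrightarrow(2)\Leftrightarrow(3)\Leftrightarrow(4)\Leftrightarrow(5)$ and then $(1)\Leftrightarrow(6)$ separately. Your direct identifications $\Hom_{A_0}(S_F,N)\cong\ker\theta_0$ for $(1)\Leftrightarrow(4)$ and $\Hom_A(\sD(A),S_F)\cong\{S_F^{\mathrm{left}}\text{-part of }\mathrm{soc}({}_AA)\}$ for $(1)\Leftrightarrow(3)$ are more elementary than the paper's route, which invokes \cite[IV.2.7]{AssemSimsonSkowronski} for $(2)\Leftrightarrow(3)$ and the adjunction $(Ae_0\otimes_{A_0}-,\,\Hom_A(Ae_0,-))$ for $(3)\Leftrightarrow(4)$. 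For $(4)\Leftrightarrow(5)$ the paper simply notes that $\tp(A_0e_F)$ is a simple \emph{injective} left $A_0$-module (because $F$ is a sink in the species of $A_0$), which is exactly what your explicit splitting argument is proving. The treatments of $(1)\Leftrightarrow(2)$ are the same computation of a minimal projective presentation of $\tau^{-1}S_F$, yours phrased via the inverse Nakayama functor and the paper's via the transpose. For $(6)\Rightarrow(1)$ the paper argues by contraposition: assuming $\theta_0$ is not a monomorphism, a hands-on analysis of proper sub-bimodules of $M$ (following Dlab) forces $\varepsilon=1$, $F\cong G$ and $\dim(U_i)_F=1$ for each $i$, and then shows that the one-dimensional kernels $\ker\rho_i^\#$ all coincide, so that the $\rho_i$ are pairwise isomorphic and $t=1$. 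Your ``transversality'' formulation is the same argument read in the forward direction; the two facts you defer to the structure theory (injectivity of $\rho_i^\#$ when $a_ib_i\ge 2$, and distinctness of kernels for non-isomorphic regular-simples with $a_ib_i=1$) are precisely what the paper's bimodule analysis supplies, so your identification of this as the technical core is accurate.
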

\begin{proof} \underline{(1)$\Leftrightarrow$(2):}
We use that $\tau^{-1}(\tp(e_FA))$ can be computed as the transpose of the left $A$-module $\sD(\tp(e_FA)) \cong \tp(Ae_F)$. A minimal projective presentation of $\tp(Ae_F)$ is given by
\begin{align}\label{eq:projPresTop(Ae_F)}
\bop_{i=1}^t Ae_i(1)\otimes_{D_i} U_i \stackrel{\pi}{\longrightarrow} Ae_F \longrightarrow \tp(Ae_F) \longrightarrow 0,
\end{align}
where $\pi$ corresponds to $\left(\id_{U_i}\right)_{1\leq i\leq t}$ under the isomorphism
\begin{align*}
\Hom_A\left( \bop_{i=1}^t Ae_i(1)\otimes_{D_i} U_i , Ae_F \right)
\cong \bop_{i=1}^t \Hom_{D_i}( U_i , \underbrace{\Hom_A(Ae_i(1),Ae_F)}_{\cong U_i}).
\end{align*}
Applying the functor $\Hom_A(-,A):~A\lmod \longrightarrow \rmod A$ to \eqref{eq:projPresTop(Ae_F)}, yields the exact sequence in the first row of the diagram
\begin{align*}
\xymatrix@R=1pc{
  e_FA \ar[r] \ar@{=}[ddd] & \Hom_A\left( \bop_{i=1}^t Ae_i(1)\otimes_{D_i} U_i , A\right) \ar[r] \ar[d]^\cong & \mathsf{Tr}(\tp(Ae_F)) \ar[r] \ar@{=}[ddd] & 0 \\
   & \bop_{i=1}^t \Hom_{D_i}\left(U_i , \Hom_A(Ae_i(1),A)\right) \ar[d] ^\cong & & \\
     & \bop_{i=1}^t \Hom_{D_i}\left(U_i , e_i(1)A\right) & & \\
  e_FA \ar[r]^-\theta & \bop_{i=1}^t U_i^\vee\otimes_{D_i}e_i(1)A \ar[u]_\cong \ar[r] & \tau^{-1}(\tp(e_FA)) \ar[r] & 0 
}
\end{align*}
where the lowest vertical map in the middle column is the isomorphism $\phi\otimes a\mapsto \phi(-)a$. Furthermore, $\theta$ is defined as the morphism which corresponds to the tuple $\left(\id_{U_i}\right)_{1\leq i\leq t}$ under the isomorphism
\begin{align*}
\Hom_A\left( e_FA,~\bigoplus_{i=1}^t U_i^\vee\otimes_{D_i} e_i(1)A \right)
\cong \bigoplus_{i=1}^t U_i^\vee\otimes_{D_i}\underbrace{e_i(1)Ae_F}_{\cong U_i}
\cong \bigoplus_{i=1}^t \Hom_{D_i} \left(U_i,U_i\right).
\end{align*}
A direct computation then shows that the diagram is commutative. Note that the above right exact sequence is a minimal projective presentation of $\tau^{-1}(\tp(e_FA))$. Hence, (2) is equivalent to $\theta$ being a monomorphism. Since the only proper submodules of $e_FA$ isomorphic to direct sums of the simple module $e_GA$, it suffices to restrict the map $\theta$ to the rightmost vertex in the species \eqref{eq:speciesSquid}. More precisely, $\theta$ is a monomorphism if and only if $\Hom_A(e_GA,\theta)$ is a monomorphism. Now the claim follows since $\Hom_A(e_GA,\theta)$ coincides with $\theta_0$ up to the canonical isomorphisms.
\\ \underline{(2)$\Leftrightarrow$(3):} This follows from \cite[Lemma IV.2.7]{AssemSimsonSkowronski}.
\\ \underline{(3)$\Leftrightarrow$(4):} Condition (3) is equivalent to
\begin{align*}
\Hom_{A \lmod}(\tp(Ae_F),P)=0
\end{align*}
for every indecomposable projective left $A$-module $P$.
The $A$-$A_0$-bimodule $Ae_0$ gives rise to an adjoint pair of functors
\begin{align*}
\xymatrix@C=3pc
{
A_0\lmod \ar@/^1pc/[rr]^{Ae_0\otimes_{A_0}-} & \perp & A\lmod. \ar@/^1pc/[ll]^{\Hom_{A\lmod}(Ae_0,-)}
}
\end{align*}
In particular, we get
\begin{align*}
\Hom_{A\lmod}(\tp(Ae_F),Ae_G) &\cong \Hom_{A\lmod}(Ae_0\otimes_{A_0}\tp(A_0e_F),Ae_G)\\
&\cong \Hom_{A_0\lmod}(\tp(A_0e_F),\Hom_A(Ae_0, Ae_G))\\
&\cong \Hom_{A_0\lmod}(\tp(A_0e_F),e_0Ae_G)\\
&= \Hom_{A_0\lmod}(\tp(A_0e_F),N),
\end{align*}
such that (3) is clearly sufficient for (4). The implication "(4)$\Rightarrow$(3)" holds because in general, $\Hom_{A\lmod}(\tp(Ae_F),P)$ vanishes for all indecomposable projective left $A$-modules except $Ae_F$.
\\ \underline{(4)$\Leftrightarrow$(5):} Clear, because $\tp(A_0e_F)$ is a simple, injective left $A_0$-module.
\\ In regard of condition (6) we note that for every regular-simple representation $(U,V,\rho)$ with endomorphism algebra $D$, the following holds:
\begin{align}
\nonumber
\dim{}_DU&=\frac{\dim_kU}{\dim_kD}
=\frac{\dim_kF\cdot\dim U_F}{\dim_kD}
\stackrel{\eqref{eq:FGdimensions},\eqref{eq:PRIdecompositionDimVector}}{=}\frac{\dim_kG\cdot\dim V_G}{\varepsilon\dim_kD}\\ \nonumber
&=\frac{\dim_kV}{\varepsilon\dim_kD}
=\frac{1}{\varepsilon}\dim{}_DV\\[+0.5cm] \nonumber
\Rightarrow
\dim{}_F(U^\vee\otimes_D V)
&= \dim{}_F(U^\vee)\cdot\dim{}_DV
= \dim U_F\cdot\dim{}_DV \\ \label{eq:dimUTensorV}
&= \varepsilon \dim {}_DU\cdot\dim U_F
= \varepsilon \dim {}_DV\cdot\dim V_G
\end{align}
In particular, the equality in (6) holds in general.
\\ \underline{(1)$\Rightarrow$(6):} The $F$-dimension of $M$ is given by
\begin{align*}
\dim{}_FM = \sqrt{\dim{}_FM\cdot\dim M_G}\sqrt{\frac{\dim{}_FM}{\dim M_G}} = 2\varepsilon.
\end{align*}
If we assume that $\theta_0$ is injective then necessarily
\begin{align*}
2\varepsilon &= \dim {}_FM \leq \dim{}_F\left(\bop_{i=1}^t U_i^\vee\otimes_{D_i}V_i\right)\\
&= \sum_{i=1}^t \dim{}_F\left(U_i^\vee\otimes_{D_i}V_i\right)
\stackrel{\eqref{eq:dimUTensorV}}{=} \varepsilon \sum_{i=1}^t \dim {}_D(U_i)\cdot\dim (U_i)_F,
\end{align*}
which proves (6).
\\ \underline{(6)$\Rightarrow$(1):} We start with the special case that there is only one exceptional point $\rho=\rho_1$ and assume that the map
\begin{align*}
\theta_0=\rho^\#:\quad M\longrightarrow \Hom_D(U,V),\quad m \longmapsto \rho(-\otimes m)
\end{align*}
is not a monomorphism. Then its kernel must be a proper $F$-$G$-subbimodule $0\neq K\subsetneq M$. Such a submodule can only exist in the case $\varepsilon=1$ since otherwise $M$ is $1$-dimensional over $F$ or over $G$. Hence, we can deduce $\varepsilon=1$ and $\dim {}_FM=2=\dim M_G$. Following the strategy in \cite[p.~108]{DlabLectureNotes}, we can identify $K$ with ${}_FF_G$ carrying an $F$-$G$-bimodule structure and observe that
\begin{align*}
G\longrightarrow F, \quad g\longmapsto 1_F\cdot g
\end{align*}
is an isomorphism of $k$-algebras. Under this isomorphism the bimodule ${}_FF_G$ identifies with the $F$-$F$-bimodule ${}_FF_F$ with standard $F$-actions. Therefore, we can assume that $F=G$ and
\begin{align*}
{}_FM = Fs\oplus Ft
\end{align*}
as left $F$-modules (with $K=Fs$). The right $F$-action can be written as
\begin{align*}
sf = fs \qquad\text{and} \qquad tf = \delta(f)s+\varphi(f)t\qquad \forall f\in F
\end{align*}
for some $k$-linear maps $\delta,\varphi:~F\longrightarrow F$. For all $f_1,f_2\in F$, these maps must satisfy
\begin{align*}
\delta(f_1f_2)s+\varphi(f_1f_2)t
&= t(f_1f_2) = (tf_1)f_2
= \delta(f_1)sf_2+\varphi(f_1)tf_2\\
&= \delta(f_1)f_2s+\varphi(f_1)(\delta(f_2)s+\varphi(f_2)t)\\
&= \left(\delta(f_1)f_2+\varphi(f_1)\delta(f_2)\right)s+\varphi(f_1)\varphi(f_2)t\\[+2pt]
\Rightarrow \varphi(f_1f_2) &= \varphi(f_1)\varphi(f_2) \qquad\text{and}\\
\delta(f_1f_2) &= \delta(f_1)f_2+\varphi(f_1)\delta(f_2).
\end{align*}
It is also clear that $\varphi(1)=1$. Thus, $\varphi$ is a $k$-algebra homomorphism which must be bijective because $F$ is a finite-dimensional division algebra. Moreover $\delta$ is a so-called $(1,\varphi)$-derivation of $F$. Let $0\neq u\in U$ and $v=\rho(u\otimes t)\in V$, then we claim that $(uF,vF,\rho|_{uF})$ is a subrepresentation of $(U,V,\rho)$. Indeed, given $m\in M$ we can write it as $m=f_1s+f_2t$ for some $f_1,f_2\in F$ and get
\begin{align*}
\rho(u\otimes m)&=\rho(u\otimes f_1s)+\rho(u\otimes f_2t)
=\rho(u\otimes sf_1)+\rho\left(u\otimes (t\varphi^{-1}(f_2)-\delta\varphi^{-1}(f_2)s)\right) \\
&=\underbrace{\rho(u\otimes s)}_{=0}f_1
+\underbrace{\rho(u\otimes t)}_{=v}\varphi^{-1}(f_2)-\underbrace{\rho(u\otimes s)}_{=0}\delta\varphi^{-1}(f_2)
\quad\in vF.
\end{align*}
Hence $(uF,vF,\rho|_{uF})$ is a subrepresentation which is regular by \eqref{eq:PRIdecompositionDimVector}. Since $(U,V,\rho)$ is assumed to be regular-simple, the above subrepresentation cannot be proper and we can conclude $U=uF$ and $V=vF$.  We get
\begin{align*}
\dim U_F = \dim {}_DU = 1 = \dim V_G = \dim {}_DV
\end{align*}
and have shown in the case $t=1$ that (6) cannot be true if (1) fails.
\\ We proceed with the general case and assume again that $\theta_0$ is not injective. It follows that neither of the maps
\begin{align*}
\rho_i^\#:\quad M\longrightarrow \Hom_D(U_i,V_i),\quad m \longmapsto \rho_i(-\otimes m)
\end{align*}
is injective. By the consideration in the special case, we know that we can assume $F=G$ as well as $U_i=F=V_i$ for all $1\leq i\leq t$. Moreover, since
\begin{align*}
0\neq \ker(\theta_0)=\bigcap_{i=1}^t\ker\left(\rho_i^\#\right)
\end{align*}
and every kernel is a $1$-dimensional $F$-$F$-subbimodule of $M$, we get $\ker(\rho_i^\#)=\ker(\rho_j^\#)$ for all $1\leq i,j\leq t$. The commutative diagram of $F$-$F$-bimodules
\begin{align*}
\xymatrix{
\ker(\rho_i) \ar@{^{(}->}[r] \ar[d] & F\otimes M \ar@{->>}[r]^{\rho_i} \ar[d]_{\can} & F \ar[d] & \ni f\quad \ar@{|->}[d] \\
\ker\left(\rho_i^\#\right) \ar@{^{(}->}[r] & M \ar@{->>}[r]^-{\rho_i^\#} & \Hom({}_FF,{}_FF) & \ni (a\mapsto af)
}
\end{align*}
shows that $\ker(\rho_i)=F\otimes\ker(\rho_i^\#)$ and therefore $\ker(\rho_i)=\ker(\rho_j)$ for all $1\leq i,j\leq t$. Since the representations $(U_i,V_i,\rho_i)$ are uniquely determined -- up to isomorphism -- by $\ker(\rho_i)$, they are all isomorphic. Hence, we can conclude $t=1$ because we assumed the exceptional points to be pairwise non-isomorphic representations. This finishes the proof that (6) fails if $\theta_0$ is not a monomorphism.
\end{proof}

In the following, we define a Coxeter-Dynkin algebra starting with a tame $F$-$G$-bimodule ${}_FM_G$, and a choice of weights $p_1,\dots,p_t\geq 2$ and exceptional points $\rho_1,\dots,\rho_t$ such that the conditions from Proposition \ref{Prop:OctopusCondition} are satisfied. In other words, we only exclude the case $F=G$, $t=1$ and $U_1=F_F=V_1$.

\begin{definition}\label{Def:CoxeterDynkinAlgebra}
Let ${}_FM_G$ be a tame bimodule, $p_1,\dots,p_t\geq 2$ weights and $\rho_1,\dots,\rho_t$ exceptional points such that the conditions from Proposition \ref{Prop:OctopusCondition} are satisfied.
Let
\begin{align*}
{}_FW_G = \cok\left(
\xymatrix{
{}_FM_G \ar^-{\theta_0}[r] & \bigoplus_{i=1}^t \Hom_{D_i}\left(U_i,V_i\right)\cong \bigoplus_{i=1}^t U_i^\vee\otimes_{D_i}V_i
}
\right),
\end{align*}
then the \emph{Coxeter-Dynkin algebra of canonical type} associated to this data is the finite-dimensional $k$-algebra
\begin{align*}
B :=
\left(
\begin{array}{c|cccc|c|cccc|c}
F & 0 & \cdots & 0 & U_1^\vee & \cdots & 0 & \cdots & 0 & U_t^\vee & W \\
\hline
 & D_1 & \cdots & D_1 & D_1 & 0 & 0 & \cdots & 0 & 0 & V_1 \\
 &  & \ddots & \vdots & \vdots & \vdots & \vdots &  & \vdots & \vdots  & \vdots \\
 &  &  & D_1 & D_1 & 0 & 0 &\cdots & 0 & 0 & V_1 \\
 &  &  &  & D_1 & 0 &  0 & \cdots & 0 & 0 & V_1 \\
\hline
 &  &  &  &  & \ddots & 0 & \cdots & 0 & 0 & \vdots \\
\hline
 &  &  &  &  &  & D_t & \cdots & D_t & D_t & V_t \\
  &  &  &  &  &  &  & \ddots & \vdots & \vdots & \vdots \\
 &  &  &  &  &  &  &  & D_t & D_t & V_t \\
 &  &  &  &  &  &  &  &  & D_t & V_t \\
\hline
 &  &  &  &  &  &  &  &  &  & G 
\end{array}
\right),
\end{align*}
where $D_i$ appears precisely $p_i-1$ times on the diagonal. The multiplication in $A$ is given by the bimodule structures over the division algebras respectively by the $F$-$G$-bimodule homomorphisms
\begin{align*}
\xymatrix{
U_i^\vee\otimes_{D_i} V_i \ar@{^{(}->}[r] & \bigoplus_{j=1}^tU_j^\vee\otimes_{D_j} V_j \ar@{->>}[r] & W.
}
\end{align*}
\end{definition}

\begin{remark}
The Coxeter-Dynkin algebra of canonical type is a quotient of the tensor algebra of the $k$-species
\begin{align}\label{eq:speciesOctopus}
\begin{minipage}{12cm}
\xymatrix@R=1pc
{
D_1 \ar[r]^{D_1}  & D_1 \ar[r]^{D_1} & \dots & \dots \ar[r]^{D_1} & D_1 \ar[r]^{D_1} & D_1 \ar[rdd]^{V_1} &   \\
D_2 \ar[r]^{D_2}  & D_2 \ar[r]^{D_2} & \dots & \dots \ar[r]^{D_2} & D_2 \ar[r] & D_2 \ar[rd]_{V_2} &   \\
  & \vdots &  & \vdots & F \ar[ruu]^(.7){U_1^\vee} \ar[ru]_{U_2^\vee} \ar[rdd]_{U_t^\vee} &  \vdots  & G  \\
  & \vdots &  & \vdots &  & \vdots &   \\
D_t \ar[r]^{D_t}  & D_t \ar[r]^{D_t} & \dots & \dots \ar[r]^{D_t} & D_t \ar[r]^{D_t} & D_t \ar[ruu]_{V_t} &   \\
}
\end{minipage}
\end{align}
where the $i$-th arm has $p_i-1$ vertices.
\end{remark}

\begin{example}\label{example:simplyLacedCDalgebra}
If we start with the same initial data as in Example \ref{example:simplyLacedSquid}, then the corresponding Coxeter-Dynkin algebra $B$ is the path algebra of the quiver
\[
\xymatrix@R=1pc
{
\bullet \ar[r]  & \bullet \ar[r] & \dots & \dots \ar[r] & \bullet \ar[r] & \bullet \ar[rdd]^{b_1} &   \\
\bullet \ar[r]  & \bullet \ar[r] & \dots & \dots \ar[r] & \bullet \ar[r] & \bullet \ar[rd]_{b_2} &   \\
  & \vdots &  & \vdots & \bullet \ar[ruu]^(.7){a_1} \ar[ru]_{a_2} \ar[rdd]_{a_t} &  \vdots  & \bullet  \\
  & \vdots &  & \vdots &  & \vdots &   \\
\bullet \ar[r]  & \bullet \ar[r] & \dots & \dots \ar[r] & \bullet \ar[r] & \bullet \ar[ruu]_{b_t} &   \\
}
\]
where the $i$-th arm has $p_i-1$ vertices, modulo the relations
\begin{align*}
\sum_{i=1}^ta_ib_i = 0\qquad\text{ and }\qquad \sum_{i=1}^t\lambda_i a_ib_i = 0.
\end{align*}
Hence, it is isomorphic to the Coxeter-Dynkin algebra of canonical type defined in
\cite{LenzingdelaPenaExtendedCanonical} and our definition is indeed a generalisation of the existing one. Again this example covers all possible cases if the base field $k$ is algebraically closed.
\end{example}

\begin{example}\label{example:complexNumbers}
Let $k=\RR$ and $M=\RR^2$. Let $\rho:\CC\otimes_\RR M \longrightarrow \CC$ be a regular-simple representation of the Kronecker quiver over $\RR$ with $\End(\rho)\cong\CC$, i.~e.~we can assume that $\rho=\rho_{a+ib}$ with
\begin{align*}
\rho_{a+ib}\left(1\otimes \begin{pmatrix}1 \\ 0\end{pmatrix} \right) = 1,\qquad
\rho_{a+ib}\left(1\otimes \begin{pmatrix}0 \\ 1\end{pmatrix} \right) = a+ib,
\end{align*}
for some $a\in\RR$ and $b\in\RR_{>0}$ (for $b=0$, this representation would not be regular-simple, moreover the representations $\rho_{a+ib}$ and $\rho_{a-ib}$ are isomorphic). For simplicity, let us assume that the corresponding weight is $p=p_1=2$.
Now, the squid algebra $A$ and the Coxeter-Dynkin algebra of canonical type $B$ can be written as 
\begin{align*}
A=\begin{pmatrix}
\CC & \CC & \CC \\
0 & \RR & \RR^2 \\
0 & 0 & \RR
\end{pmatrix},
\qquad\qquad
B=\begin{pmatrix}
\RR & \CC & 0 \\
0 & \CC & \CC \\
0 & 0 & \RR
\end{pmatrix}.
\end{align*}
In particular, the Coxeter-Dynkin algebra is a quotient of
\begin{align*}
C=\begin{pmatrix}
\RR & \CC & \CC \\
0 & \CC & \CC \\
0 & 0 & \RR
\end{pmatrix}
\subseteq \mathsf{Mat}_{3\times 3}(\CC),
\end{align*}
which is a tame hereditary algebra of type $\widetilde{B}_2$. One can show, that in this particular case the corresponding squid algebra and Coxeter-Dynkin algebra do not depend on the complex number $a+ib$ up to isomorphism.
We also note that $B$ is of finite representation type, there exist precisely seven isomorphism classes of indecomposable $B$-modules. 
\end{example}

\section{From the squid to the Coxeter-Dynkin algebra via generalised APR-tilting}\label{section:squidToCDviageneralisedAPR}

The objective of this section is to establish a derived equivalence between the squid algebra $A$ and the Coxeter-Dynkin algebra of canonical type $B$ by finding a (classical) tilting module over $A$ whose endomorphism algebra is isomorphic to $B$. We start with recalling some basics of tilting theory. Let $A$ be any finite-dimensional $k$-algebra of finite global dimension. We call a module $T\in \rmod A$ a \emph{classical tilting module} if the following conditions are satisfied (see e.~g.~\cite{AssemSimsonSkowronski}):
\begin{enumerate}
\item[(CT1)] $T$ has projective dimension at most 1,
\item[(CT2)] $\Ext_A^1(T,T)=0$,
\item[(CT3)] there exists an exact sequence
\[
0\longrightarrow A_A\longrightarrow T_0 \longrightarrow T_1 \longrightarrow 0
\]
with $T_0,T_1\in \add(T)$.
\end{enumerate}
Note that one can find deviating definitions in the literature. Often one requires only that $T$ has finite projective dimension and that there is an $\add(T)$-coresolution of ${}_AA$ of arbitrary finite length \cite{BrennerButlerHandbookOfTiltingTheory}.
Let us also recall a consequence of the definition. Defining $B=\End_A(T)$, it is clear that $T$ becomes a $B$-$A$-bimodule and we obtain a functor
\begin{align*}
\Hom_A(T,-):\qquad \rmod A \longrightarrow \rmod B,
\end{align*}
which restricts to an equivalence $\add(T)\longrightarrow \mathsf{proj-}B$ of additive categories. In particular, we get an equivalence between the bounded homotopy categories and a diagram
\begin{align*}
\xymatrix{
\kK^b(\add(T)) \ar[r]^\cong \ar[d]_\cong & \kK^b(\mathsf{proj-}B) \ar[d]^\cong \\
\kD^b(\rmod A) \ar@{.>}[r] & \kD^b(\rmod B)
}
\end{align*}
where the right vertical functor is an equivalence because $B$ has finite global dimension. Moreover, the left vertical functor is fully-faithful by condition (CT1) and (CT2) and
its essential image contains complexes of projectives by (CT3).
Hence, it is also an equivalence since $A$ is of finite global dimension
\cite[§III.2]{HappelTriangulated}. In other words, in this situation we obtain an equivalence (of triangulated categories) between $\kD^b(\rmod A)$ and $\kD^b(\rmod B)$ and we say that $A$ and $B$ are \emph{derived equivalent}.

One approach to construct tilting modules is to start with the direct sum of the indecomposable projectives and replace one summand by a different indecomposable module. One instance of this is the following result due to Brenner and Butler which generalises the concept of \emph{APR-tilting modules} \cite{APRTilting} (which is recovered in the case where $eA$ is a simple projective $A$-module).
\begin{theorem}[{\cite[Theorem IX]{BrennerButlerReflectionFunctors}}] \label{thm:BBgeneralisedAPR} 
Let $A$ be a basic algebra, $e\in A$ a primitive idempotent and $S=\tp(eA)$ the corresponding simple module such that
\begin{enumerate}
\item[(i)] $S$ is not injective,
\item[(ii)] $\Hom_A(\sD({}_AA),S)=0$ and
\item[(iii)] $\sD(Ae)$ is not a direct summand of the injective envelope of $\sD(Ae)/S$.
\end{enumerate}
Then
\begin{align*}
T= (1-e)A \oplus \tau^{-1}(S) \quad\in \rmod A
\end{align*}
is a classical tilting module.
\end{theorem}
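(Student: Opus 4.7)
The plan is to verify the three tilting axioms (CT1)--(CT3) for $T=(1-e)A\oplus \tau^{-1}(S)$ by unpacking the conditions (i)--(iii) through an explicit description of $\tau^{-1}(S)$. Concretely, I would start from a minimal injective copresentation $0\to S\to I^{0}\to I^{1}$ of the simple top: one has $I^{0}=\sD(Ae)$ (the injective envelope of $S$) and $I^{1}$ is the injective envelope of $I^{0}/S$. Applying the inverse Nakayama functor $\nu^{-1}=\Hom_A(\sD(A_A),-)$ produces the four-term exact sequence
\[
0\longrightarrow \nu^{-1}(S)\longrightarrow \nu^{-1}(I^{0})\longrightarrow \nu^{-1}(I^{1})\longrightarrow \tau^{-1}(S)\longrightarrow 0,
\]
with $\nu^{-1}(I^{0})=eA$. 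Condition (ii) says exactly $\nu^{-1}(S)=\Hom_A(\sD(A_A),S)=0$, which collapses this to a projective resolution $0\to eA\to \nu^{-1}(I^{1})\to \tau^{-1}(S)\to 0$ and hence gives (CT1).

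Next I would deduce (CT3). The key observation is that condition (iii) --- that $\sD(Ae)=I^{0}$ is not a summand of the injective envelope $I^{1}$ of $I^{0}/S$ --- translates via $\nu^{-1}$ into the statement that $eA$ is not a summand of $\nu^{-1}(I^{1})$; equivalently $\nu^{-1}(I^{1})\in\add((1-e)A)\subseteq\add(T)$. Combining the resolution above with the identity on $(1-e)A$ therefore yields the desired short exact sequence
\[
0\longrightarrow A_A\longrightarrow (1-e)A\oplus \nu^{-1}(I^{1})\longrightarrow \tau^{-1}(S)\longrightarrow 0,
\]
whose middle and right terms lie in $\add(T)$.

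For (CT2) I would split $\Ext^{1}_A(T,T)$ into four summands along $T=(1-e)A\oplus \tau^{-1}(S)$. Those with $(1-e)A$ in the first argument vanish by projectivity. By condition (i), $\tau(\tau^{-1}(S))\cong S$, so the Auslander--Reiten formula gives $\Ext^{1}_A(\tau^{-1}(S),Y)\cong \sD\,\overline{\Hom}_A(Y,S)$, where $\overline{\Hom}$ denotes morphisms modulo those factoring through a projective. For $Y=(1-e)A$ already $\Hom_A((1-e)A,S)=0$, since $S=\tp(eA)$ does not occur in the top of any summand of $(1-e)A$. For $Y=\tau^{-1}(S)$, applying $\Hom_A(-,S)$ to the projective resolution from the first paragraph embeds $\Hom_A(\tau^{-1}(S),S)$ into $\Hom_A(\nu^{-1}(I^{1}),S)$, and the latter detects $eA$-summands of $\nu^{-1}(I^{1})$, of which there are none by (iii).

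The main obstacle, and the organising idea of the proof, is the translation between the rather opaque intrinsic conditions (i)--(iii) and concrete properties of the projective resolution of $\tau^{-1}(S)$. Once the identification of (iii) with \emph{``$eA$ is not a direct summand of $\nu^{-1}(I^{1})$''} is in place, the three axioms fall out in parallel: (i) enables $\tau^{-1}$ and the AR formula, (ii) yields the length-one projective resolution (giving CT1), and (iii) guarantees simultaneously the $\add(T)$-coresolution of $A$ (giving CT3) and the vanishing of the last non-trivial piece of $\Ext^{1}_A(T,T)$.
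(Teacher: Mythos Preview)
The paper does not prove this theorem; it is quoted as \cite[Theorem IX]{BrennerButlerReflectionFunctors} and used as a black box, so there is no argument in the paper to compare your proposal against. Your sketch is nonetheless correct and is essentially the standard verification: condition (ii) collapses the defining four-term sequence to a short exact sequence $0\to eA\to \nu^{-1}(I^{1})\to\tau^{-1}(S)\to 0$, giving (CT1); condition (iii) forces $\nu^{-1}(I^{1})\in\add((1-e)A)$, giving both (CT3) and the vanishing of the last nontrivial piece of $\Ext^{1}$; and condition (i) makes $\tau\tau^{-1}(S)\cong S$ so that the Auslander--Reiten formula applies. One small slip: in the AR formula $\Ext^{1}_A(X,Y)\cong\sD\,\overline{\Hom}_A(Y,\tau X)$ the bar denotes morphisms modulo those factoring through an \emph{injective}, not a projective; since you show the full $\Hom$ vanishes, this does not affect your argument.
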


From now on, let $A$ and $B$ be again the squid resp.~Coxeter-Dynkin algebra defined in Section \ref{section:squidAndCDalgebra}. In particular we assume that the conditions from proposition \ref{Prop:OctopusCondition} are satisfied. We write $X=\tau^{-1}(\tp(e_FA))\in \rmod A$.

\begin{corollary}\label{cor:octopusGenAPRTiltingModule}
The module
\[
T=e_GA~\oplus~ \bigoplus_{i=1}^t\bigoplus_{j=1}^{p_i-1}~ e_i(j)A~ \oplus X \quad\in \rmod A
\]
is a classical tilting module.
\end{corollary}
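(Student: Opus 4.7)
The plan is to apply Brenner--Butler's generalised APR-tilting result (Theorem \ref{thm:BBgeneralisedAPR}) to the primitive idempotent $e = e_F \in A$ and the simple right $A$-module $S = \tp(e_FA)$. Once the three hypotheses of that theorem are verified, the output is the classical tilting module $(1-e_F)A \oplus \tau^{-1}(S)$. Since the decomposition $1 - e_F = e_G + \sum_{i=1}^t \sum_{j=1}^{p_i-1} e_i(j)$ is into the remaining primitive orthogonal idempotents and $X = \tau^{-1}(\tp(e_FA))$ by definition, this tilting module coincides with the $T$ from the statement, and the corollary is proved.

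Two of the three hypotheses are essentially immediate. Condition (ii) of Theorem \ref{thm:BBgeneralisedAPR} asks that $\Hom_A(\sD({}_AA), \tp(e_FA)) = 0$, which is literally condition (3) of Proposition \ref{Prop:OctopusCondition} and therefore holds by our standing assumption. For condition (i), I would observe that the vertex $F$ in the species \eqref{eq:speciesSquid} receives the arrows labelled $U_i$ from each $e_i(1)$ --- and that at least one such arm is present, since otherwise condition (6) of Proposition \ref{Prop:OctopusCondition} would fail --- so the indecomposable injective envelope of $S_F$ strictly contains $S_F$ and $S_F$ fails to be injective.

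I expect condition (iii) --- that $\sD(Ae_F)$ is not a direct summand of the injective envelope of $\sD(Ae_F)/S_F$ --- to require the most care, though even here the argument should be short. My strategy is to reduce it to the vanishing of $\Ext_A^1(S_F, S_F)$. Since $\sD(Ae_F)$ is the indecomposable injective with simple socle $S_F$, it is a direct summand of the injective envelope of a module $Y$ if and only if $\Hom_A(S_F, Y) \ne 0$. Applying $\Hom_A(S_F, -)$ to the short exact sequence
\[
0 \longrightarrow S_F \longrightarrow \sD(Ae_F) \longrightarrow \sD(Ae_F)/S_F \longrightarrow 0
\]
and using both the injectivity of $\sD(Ae_F)$ and the identification $\Hom_A(S_F, \sD(Ae_F)) = \End_A(S_F)$ (coming from the simplicity of the socle) produces an isomorphism
\[
\Hom_A\bigl(S_F,\,\sD(Ae_F)/S_F\bigr) \;\cong\; \Ext_A^1(S_F, S_F).
\]
The right-hand side vanishes because the species \eqref{eq:speciesSquid} has no loop at the vertex $F$, which settles condition (iii) and finishes the plan.
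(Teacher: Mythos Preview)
Your proof is correct and follows the same approach as the paper's: both apply Theorem~\ref{thm:BBgeneralisedAPR} and verify its three hypotheses in the same way for (i) and (ii). The only minor difference is that for condition (iii) the paper simply cites \cite[\S3.3]{BrennerButlerReflectionFunctors} (the condition holds for any triangulable algebra), whereas you spell out the underlying reason directly by reducing to $\Ext_A^1(S_F,S_F)=0$ via the absence of a loop at the vertex $F$.
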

\begin{proof}
We will apply Theorem \ref{thm:BBgeneralisedAPR}. It is clear that $\tp(e_FA)$ is not injective, since the corresponding vertex in the species \eqref{eq:speciesSquid} is not a source. Condition (ii) is satisfied by assumption as it appears in proposition \ref{Prop:OctopusCondition}. Finally, the third condition from Theorem \ref{thm:BBgeneralisedAPR} holds for any triangulable algebra (see \cite[§3.3]{BrennerButlerReflectionFunctors}).
\end{proof}

In order to compute the endomorphism algebra of the tilting module $T$, we recall from the proof of proposition \ref{Prop:OctopusCondition} that $X$ fits into a short exact sequence
\begin{align}
\label{eq:sesX}
0\longrightarrow e_FA \stackrel{\theta}{\longrightarrow} \bop_{i=1}^t U_i^\vee\otimes_{D_i}e_i(1)A \longrightarrow X \longrightarrow 0
\end{align}
and start with the following auxiliary lemma.

\begin{lemma}\label{lemma:PropertiesX}
The module $X=\tau^{-1}(\tp(e_FA))\in \rmod A$ satisfies
\begin{align*}
\tag{i}
\Hom_A(X,e_GA)&=0,\\
\tag{ii}
\Hom_A(X,e_i(j)A)&=0\qquad \forall\, 1\leq i\leq t,\, 1\leq j\leq p_i\qquad\text{and} \\
\tag{iii}
\End_A(X)&\cong F.
\end{align*}
\end{lemma}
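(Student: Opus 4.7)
The sequence~\eqref{eq:sesX} is a minimal projective resolution of $X$, so my plan is to apply $\Hom_A(-,P)$ to it for suitable right $A$-modules $P$, exploiting the upper-triangular structure of $A$ throughout. Repeatedly I will use the identifications $\Hom_A(eA,fA)\cong fAe$ together with the readings $e_GAe_i(1)=0$, $e_FAe_i(1)=0$, $e_{i_0}(j_0)Ae_{i_0}(1)\cong D_{i_0}$ and $e_{i_0}(j_0)Ae_F\cong U_{i_0}$ off the species~\eqref{eq:speciesSquid}.

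For (i), the functor $\Hom_A(-,e_GA)$ applied to~\eqref{eq:sesX} reduces the statement to the vanishing of the middle term $\bigoplus_i\Hom_{D_i}(U_i^\vee, e_GAe_i(1))$, which is immediate because there are no paths from $e_G$ back to any $e_i(1)$. For (ii), applying $\Hom_A(-, e_{i_0}(j_0)A)$ leaves only the summand $i=i_0$: the double-dual isomorphism $\Hom_{D_{i_0}}(U_{i_0}^\vee, D_{i_0})\cong U_{i_0}$ identifies both the middle term and $\Hom_A(e_FA, e_{i_0}(j_0)A)\cong e_{i_0}(j_0)Ae_F\cong U_{i_0}$ with a single copy of $U_{i_0}$. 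The crux is then to check that the induced map $\theta^*\colon U_{i_0}\to U_{i_0}$ is the identity, which follows by tracing through the construction of $\theta$ in Proposition~\ref{Prop:OctopusCondition}, where it corresponds to the tuple $(\id_{U_i})_i$ under the canonical isomorphism $\Hom_A(e_FA, \bigoplus_i U_i^\vee\otimes_{D_i}e_i(1)A)\cong\bigoplus_i\Hom_{D_i}(U_i,U_i)$.

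For (iii), the plan is to bracket $\End_A(X)$ between two copies of $F$. Applying $\Hom_A(X,-)$ to~\eqref{eq:sesX}, part (ii) together with the (non-canonical) right $A$-module isomorphism $U_i^\vee\otimes_{D_i}e_i(1)A\cong (e_i(1)A)^{\dim_{D_i}U_i^\vee}$ forces $\Hom_A(X,\bigoplus_i U_i^\vee\otimes_{D_i}e_i(1)A)=0$, so the resulting long exact sequence embeds $\End_A(X)$ as a $k$-subspace of $\Ext^1_A(X, e_FA)$. Dually, applying $\Hom_A(-, e_FA)$ to~\eqref{eq:sesX} and using $e_FAe_i(1)=0$ identifies $\Ext^1_A(X, e_FA)\cong\Hom_A(e_FA, e_FA)=F$, yielding $\dim_k\End_A(X)\le\dim_k F$. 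For the matching lower bound, the left $F$-actions on $e_FA$ and on each $U_i^\vee$ commute with $\theta$ --- this relies on $\theta(e_F)$ being the ``Casimir'' tensor, which is central with respect to the $F$-actions --- and descend to a left $F$-action on $X$, producing a $k$-algebra map $F\to\End_A(X)$. This map is injective because $F$ is a division algebra and its restriction to $Xe_i(1)\cong U_i^\vee$ coincides with the tautological faithful left $F$-action. Comparing dimensions then forces $\End_A(X)\cong F$.

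The main technical obstacle I foresee is the identity-map identification of $\theta^*$ in (ii). Besides being the content of (ii), this vanishing is precisely the input needed to establish $\Hom_A(X,\bigoplus_i U_i^\vee\otimes_{D_i}e_i(1)A)=0$ in (iii), so any miscalculation in tracing the natural isomorphisms would derail both parts.
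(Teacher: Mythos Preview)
Your arguments for (i) and (ii) are essentially identical to the paper's: both apply $\Hom_A(-,P)$ to the short exact sequence~\eqref{eq:sesX}, read off the relevant $eAe'$ from the upper-triangular shape of $A$, and for (ii) verify that the induced map between two copies of $U_{i_0}$ is the identity by unwinding the definition of $\theta$ as the tuple $(\id_{U_i})_i$.

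For (iii) your route is a mild repackaging of the paper's. The paper computes $\End_A(X)$ directly as endomorphisms of the two-term complex $e_FA\to\bigoplus_i U_i^\vee\otimes_{D_i}e_i(1)A$ in $\kK^b(\mathsf{proj-}A)$: uniqueness of the extension $g$ for given $f\in F$ comes from (ii), and existence from the left $F$-action on each $U_i^\vee$. You instead obtain the upper bound $\dim_k\End_A(X)\le\dim_k F$ from the two long exact sequences giving $\End_A(X)\hookrightarrow\Ext^1_A(X,e_FA)\cong F$, and the lower bound from an explicit $k$-algebra map $F\to\End_A(X)$. These are the same two ingredients in a different wrapper; your explicit check that the Casimir element in $U_i^\vee\otimes_{D_i}U_i$ is central for the two $F$-actions makes precise what the paper only asserts (``because $U_i^\vee$ carries a left $F$-action''), and your observation that $Xe_i(1)\cong U_i^\vee$ gives a clean reason for injectivity of $F\to\End_A(X)$. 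Both approaches are correct and of comparable length.
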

\begin{proof}
Applying $\Hom_A(-,e_GA)$ to \eqref{eq:sesX} yields an exact sequence
\begin{align*}
0\longrightarrow \Hom_A(X,e_GA)&\longrightarrow \Hom_A\left(\bop_{i=1}^t U_i^\vee \otimes e_i(j)A,e_GA\right)=0
\end{align*}
which proves (i).\\
If we apply $\Hom_A(-,e_i(j)A)$ to \eqref{eq:sesX}, we get a long exact sequence including the top map in the following diagram:
\begin{align*}
\xymatrix{
\Hom_A\left(\bop_{l=1}^t U_l^\vee \otimes_{D_i}Ae_l(1),e_i(j)A\right) \ar[r] \ar[d]_\cong & \Hom_A(e_FA,e_i(j)A) \ar[dd]^\cong \\
\Hom_A\left(U_i^\vee,\Hom_A(e_i(1)A,e_i(j)A)\right) \ar[d]_\cong & \\
\Hom_A\left(U_i^\vee,D_i\right) \ar[r]^\cong & U_i
}
\end{align*}
The vertical maps are the obvious isomorphisms and we use that $\Hom_A(e_l(1)A,e_i(j)A)=0$ whenever $l\neq i$. The map at the bottom is the canonical isomorphism $\left(U_i^\vee\right)^\vee\cong U_i$ and one can check by hand that the diagram is commutative. Therefore the top map, whose kernel is $\Hom_A(X,e_i(j)A)$ by the long exact sequence, is an isomorphism and we can deduce (ii).\\
The endomorphism algebra of $X$ in $\rmod A$ can be identified with the endomorphism algebra of the corresponding stalk complex in $\kD^b(\rmod A)$. By \eqref{eq:sesX}, this is isomorphic to the endomorphism algebra of the complex
\begin{align}\label{eq:2termComplex}
e_FA \longrightarrow \bigoplus_{i=1}^t U_i^\vee\otimes_{D_i}e_i(1)A
\end{align}
in $\kK^b(\mathsf{proj-}A)$. Since $\Hom_A(e_i(1)A,e_FA)=0$ for all $1\leq i\leq t$, there are no nullhomotopies of non-trivial chain maps and we only have to compute the endomorphisms of \eqref{eq:2termComplex} in $\kC^b(\mathsf{proj-}A)$. For this, we first claim that such an endomorphism
\begin{align}\label{eq:2termComplexEnd}
\begin{minipage}{12cm}
\xymatrix
{
e_FA \ar[r] \ar[d]_f & \bigoplus_{i=1}^t U_i^\vee\otimes_{D_i}e_i(1)A \ar[d]^g \\
e_FA \ar[r] & \bigoplus_{i=1}^t U_i^\vee\otimes_{D_i}e_i(1)A 
}
\end{minipage}
\end{align}
is uniquely determined by $f\in \End_A(e_FA)\cong F$. Indeed if $f$ is zero in \eqref{eq:2termComplexEnd} then so is $g$ since $\Hom_A(X,e_i(1)A)=0$ by (ii). Secondly, for every $f\in\End_A(e_FA)\cong F$ there exists a map $g$ such that \eqref{eq:2termComplexEnd} commutes, because $U_i^\vee$ carries a left $F$-action for every $1\leq i\leq t$. This proves that the 2-term complex \eqref{eq:2termComplex} has the endomorphism ring $F$ in $\kC^b(\mathsf{proj-}A)$ and thus $\End_A(X)\cong F$ as $k$-algebras.
\end{proof}

Note that one could have also used properties of the Auslander-Reiten translation to compute the endomorphism algebra of  $X=\tau^{-1}(\tp(e_FA))$. Since $\tau$ and $\tau^{-1}$ induce mutually inverse equivalences between stable categories \cite[IV.2.11]{AssemSimsonSkowronski}, one only has to argue that the stable endomorphism algebra coincides with the actual endomorphism algebra.

\begin{theorem}\label{thm:tiltSquidToOctopus}
The $A$-module
\[
T=e_GA~\oplus~ \bigoplus_{i=1}^t\bigoplus_{j=1}^{p_i-1}~ e_i(j)A~ \oplus X \quad\in \rmod A
\]
is a classical tilting module with $\End_A(T) \cong B$. In particular, the functor $\Hom_A(T,-)$ induces an equivalence of triangulated categories
\begin{align*}
\kD^b(\rmod A)\stackrel{\cong}{\longrightarrow} \kD^b(\rmod B).
\end{align*}
\end{theorem}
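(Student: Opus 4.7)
The plan is to invoke Corollary~\ref{cor:octopusGenAPRTiltingModule} to dispatch the tilting property, and then identify $\End_A(T)\cong B$ by a block-by-block Hom computation plus a verification of the multiplication. Once the algebra isomorphism is in hand, the derived equivalence follows automatically from the tilting theory recalled at the start of the section (the conditions (CT1)--(CT3) are already covered by the corollary, and $B$ has finite global dimension).

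For the identification $\End_A(T)\cong B$, the Hom spaces between the projective summands of $T$ are read off directly from the matrix of $A$ and reproduce the part of $B$ not involving the idempotent $e_F$. Lemma~\ref{lemma:PropertiesX} already supplies three of the four entries involving $X$, namely $\End_A(X)\cong F$, $\Hom_A(X,e_GA)=0$ and $\Hom_A(X,e_i(j)A)=0$. The remaining two are $\Hom_A(e_i(j)A,X)$ and $\Hom_A(e_GA,X)$; I would compute them by applying $\Hom_A(e_i(j)A,-)$ respectively $\Hom_A(e_GA,-)$ to the short exact sequence~\eqref{eq:sesX}. Both source modules are projective, so the resulting sequences are short exact. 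In the first case one has $\Hom_A(e_i(j)A,e_FA)=0$ (no paths from an arm vertex back to $F$), so only the $l=i$ summand of $\bigoplus_l U_l^\vee\otimes_{D_l}\Hom_A(e_i(j)A,e_l(1)A)$ contributes, yielding $\Hom_A(e_i(j)A,X)\cong U_i^\vee$ as an $F$-$D_i$-bimodule. In the second case, the functor $\Hom_A(e_GA,-)=(-)e_G$ turns~\eqref{eq:sesX} into a short exact sequence of $F$-$G$-bimodules whose leftmost map is precisely $\theta_0$; by definition of $W$ this identifies $\Hom_A(e_GA,X)\cong W$. The bimodule structures match those in $B$ by construction.

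It remains to verify that composition in $\End_A(T)$ agrees with the multiplication in $B$. Most composition pairings vanish on one side or the other and pose no difficulty. The essential point is the pairing
\[
\Hom_A(e_i(1)A,X)\otimes_{D_i}\Hom_A(e_GA,e_i(1)A)\longrightarrow\Hom_A(e_GA,X),
\]
which on the $A$-side I would trace by lifting a class $u\in U_i^\vee\cong\Hom_A(e_i(1)A,X)$ to the explicit map $e_i(1)A\to\bigoplus_l U_l^\vee\otimes_{D_l}e_l(1)A$ given by the middle term of~\eqref{eq:sesX}; composing with $v\in V_i\cong\Hom_A(e_GA,e_i(1)A)$ produces the element $u\otimes v$ in the $l=i$ component of $\bigoplus_l U_l^\vee\otimes_{D_l}V_l$, whose image in $W$ agrees by construction with the map $U_i^\vee\otimes_{D_i}V_i\hookrightarrow\bigoplus_j U_j^\vee\otimes_{D_j}V_j\twoheadrightarrow W$ prescribed in Definition~\ref{Def:CoxeterDynkinAlgebra}. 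This bookkeeping — tracking the bimodule conventions and the cokernel map — is the only real obstacle; once it is settled, $\End_A(T)\cong B$ follows, and the derived equivalence is immediate.
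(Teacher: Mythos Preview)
Your approach is essentially identical to the paper's: invoke Corollary~\ref{cor:octopusGenAPRTiltingModule} for the tilting property, cite Lemma~\ref{lemma:PropertiesX} for the three Hom-groups out of $X$, compute the two remaining Hom-groups into $X$ by applying $\Hom_A(e_i(j)A,-)$ and $\Hom_A(e_GA,-)$ to~\eqref{eq:sesX}, and then check that the one nontrivial composition lands in $W$ via the cokernel map. One small slip: your conclusion $\Hom_A(e_i(j)A,X)\cong U_i^\vee$ is only correct for $j=1$, since $\Hom_A(e_i(j)A,e_i(1)A)\cong e_i(1)Ae_i(j)$ vanishes for $j\geq 2$; this case distinction (which the paper makes explicit) is exactly what produces the zeros in the first row of the matrix form of $B$.
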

\begin{proof}
The fact that $T$ is a classical tilting module has already been deduced in Corollary \ref{cor:octopusGenAPRTiltingModule} from Theorem \ref{thm:BBgeneralisedAPR}. It remains to compute the endomorphism algebra of $T$.\\
We can apply $\Hom_A(e_i(j)A,-)$ to \eqref{eq:sesX} and use that $\Hom_A(e_i(j)A,e_FA)=0$, to obtain
\begin{align*}
\Hom_A(e_i(j)A,X)&\cong \Hom_A\left(e_i(j)A~,~\bop_{l=1}^t U_l^\vee\otimes_{D_l}e_l(1)A\right)\\[+0.3cm]
&= \Hom_A\left(e_i(j)A,U_i^\vee\otimes_{D_i}e_i(1)A\right)
\cong \begin{cases} U_i^\vee\otimes_{D_i} D_i\cong U_i^\vee, & \text{ if } j=1, \\ 0, & \text{ if } j\neq 1. \end{cases}
\end{align*}
If we apply $\Hom_A(e_GA,-)$ to \eqref{eq:sesX} we get the short exact sequence
\begin{align*}
0\longrightarrow M \stackrel{\theta}{\longrightarrow} \bigoplus_{i=1}^t U_i^\vee\otimes_{D_i}V_i \longrightarrow \Hom_A(e_GA,X) \longrightarrow 0,
\end{align*}
which implies that $\Hom_A(Ae_G,X)\cong W$. With the previous two calculations and Lemma \ref{lemma:PropertiesX} we can write $\End_A(T)$ in matrix form and observe that it coincides with the definition of $B$.
The very last step is to verify that the algebra structures coincide, which is clear in all cases except for those multiplications which involve the quotient map onto $\cok(\theta_0)=W$. More precisely, one can check by direct computation that the diagram
\begin{align*}
\xymatrix@C=5pc
{
\Hom_A(e_i(1)A,X)\underset{\End_A(e_i(1)A)}{\otimes}\Hom_A(e_GA,e_i(1)A) \ar[r]^-{\mathsf{comp}} \ar[d]_\cong & \Hom_A(e_GA,X) \ar[dd]^\cong \\
\left(U_i^\vee\underset{D_i}{\otimes} D_i\right)\underset{D_i}{\otimes}V_i \ar[d]_\cong & \\
U_i^\vee\underset{D_i}{\otimes} V_i \ar@{->>}[r] & W
}
\end{align*}
is commutative for all $1\leq i\leq t$, where the map on top is given by composition and the map at the bottom is the cokernel map of $\theta_0$.
\end{proof}

\section{Derived equivalence via one-point-extensions}\label{section:CDalgebraViaOnePointExtension}

The goal of this section is to establish another perspective on the derived equivalence between the squid algebra and the Coxeter-Dynkin algebra. More precisely, we construct a different equivalence $\kD^b(A)\cong\kD^b(B)$ which does not come from a classical tilting module as in Section \ref{section:squidToCDviageneralisedAPR} but from a \emph{tilting complex}. The construction of this tilting complex makes use of reflection functors for hereditary algebras and the fact that all our algebras can be seen as one-point extensions of hereditary algebras.

Following \cite{RickardMoritaTheory} we call a bounded complex of projective modules $T\in\kK^b(\mathsf{proj-}A)$ a \emph{tilting complex} if the following two conditions are satisfied:
\begin{enumerate}
\item[(T1)] $\Hom_{\kK^b(\mathsf{proj-}A)}(T,T[i])=0$ for all $0\neq i\in \ZZ$ and
\item[(T2)] $\add(T)$ generates $\kK^b(\mathsf{proj-}A)$ as a triangulated category.
\end{enumerate}
Sometimes we will also use this terminology to denote a bounded complex of (not necessarily projective) $A$-modules if it is quasi-isomorphic to a complex of the above form.
It is proven in \cite[Theorem 6.4]{RickardMoritaTheory} that the existence of such a tilting complex $T$ with $\End(T)\cong B$ is equivalent to $\kK^b(\mathsf{proj-}A)$ and $\kK^b(\mathsf{proj-}B)$ being equivalent as triangulated categories. More concretely, in this case there is an equivalence of triangulated categories sending the tilting complex $T$ to the free module $B_B$ viewed as a complex concentrated in degree zero.
By \cite[Prop.~9.1]{RickardMoritaTheory} the above statements are equivalent to their dual versions using left modules.

It turns out that for our purposes it is more convenient to work with left modules for a moment. 
Let $A_0$ and $B_0$ be finite-dimensional algebras which are derived equivalent, i.~e.~we have equivalences of triangulated categories
\begin{align*}
\xymatrix@R=1pc{
& \kK^b(A_0\mathsf{-proj}) \ar[r]^\cong \ar@{^{(}->}[d] & \kK^b(B_0\mathsf{-proj}) \ar@{^{(}->}[d] \\
\qquad\Phi_0: & \kD^b(A_0\lmod) \ar[r]^\cong & \kD^b(B_0\lmod).
}
\end{align*}
Note that the vertical functors are in general fully faithful, and equivalences if and only if $A_0$ and $B_0$ are of finite global dimension. Then by Rickard's theorem there exists a tilting complex $T_0\in\kK^b(A_0\mathsf{-proj})$ with $\End(T_0)^\op\cong B_0$ such that $\Phi_0(T_0)={}_{B_0}(B_0)$. The following result -- proven by Barot and Lenzing in a slightly more restricted setting -- shows that the derived equivalence between $A_0$ and $B_0$ extends to a derived equivalence between certain one-point-extension algebras. For convenience, we give a full proof even though it works analogously to the one given in \cite{BarotLenzingOnePointExtension}.

\begin{theorem}\label{thm:BarotLenzing}
Let $G$ be a finite-dimensional division algebra and $N$ an $A_0$-$G$-bimodule such that $\Phi_0(N)$ is isomorphic in $\kD^b(B_0\lmod)$ to a complex concentrated in degree zero, in other words we can identify $\Phi_0(N)$ with a left $B_0$-module $N'$. Then $N'$ becomes a $B_0$-$G$-bimodule with a right $G$-action induced by that on $N$. Let
\begin{align*}
A=\begin{pmatrix} A_0 & N \\ 0 & G \end{pmatrix},\qquad 
B=\begin{pmatrix} B_0 & N' \\ 0 & G \end{pmatrix}
\end{align*}
be the corresponding one-point-extension algebras of $A_0$ resp.~$B_0$ and
\begin{align*}
e_0=\begin{pmatrix} 1_{A_0} & 0 \\ 0 & 0 \end{pmatrix},\quad 
e_G=\begin{pmatrix} 0 & 0 \\ 0 & 1_G \end{pmatrix}\quad\in A.
\end{align*}
Then $T=(Ae_0\otimes_{A_0}T_0)\oplus Ae_G[0]\in\kK^b(A\mathsf{-proj})$ is a tilting complex with $\End(T)^\op\cong B$.
\end{theorem}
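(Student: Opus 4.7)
The plan is to invoke Rickard's Morita-type theorem by verifying that $T$ is a tilting complex in $\kK^b(A\mathsf{-proj})$ and then identifying $\End(T)^{\op}$ with $B$. First, I would confirm that $T$ lies in $\kK^b(A\mathsf{-proj})$: the bimodule $Ae_0$ is a finitely generated projective left $A$-module and is projective (hence flat) as a right $A_0$-module, so the functor $Ae_0\otimes_{A_0}-$ sends bounded complexes of finitely generated projective left $A_0$-modules to bounded complexes of finitely generated projective left $A$-modules; the summand $Ae_G[0]$ is already of that form.

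The key tool is the standard adjunction $\Hom_A(Ae_0\otimes_{A_0}M,\,N)\cong \Hom_{A_0}(M,\,e_0N)$ together with the identities
\[
e_0Ae_0=A_0,\qquad e_0Ae_G=N,\qquad e_GAe_0=0,\qquad e_GAe_G=G
\]
enforced by the shape of the one-point extension. Writing $X:=Ae_0\otimes_{A_0}T_0$ and $Y:=Ae_G[0]$, the four blocks of $\Hom_{\kK^b(A\mathsf{-proj})}(T,T[i])$ become
\begin{align*}
\Hom(X,X[i])&\cong \Hom_{\kD^b(A_0\lmod)}(T_0,T_0[i]),\\
\Hom(X,Y[i])&\cong \Hom_{\kD^b(A_0\lmod)}(T_0,N[i]),\\
\Hom(Y,X[i])&\cong (e_GAe_0)\otimes_{A_0}T_0[i]=0,\\
\Hom(Y,Y[i])&\cong \Hom_{\kK^b(A\mathsf{-proj})}(Ae_G,Ae_G[i]).
\end{align*}
The first vanishes for $i\neq 0$ because $T_0$ satisfies (T1); the second equals $\Hom_{\kD^b(B_0\lmod)}(B_0,N'[i])$ via $\Phi_0$ and vanishes for $i\neq 0$ since $N'$ is concentrated in degree zero; the third is zero on the chain level; and the fourth vanishes for $i\neq 0$ because $Ae_G$ is a stalk projective. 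For (T2), every indecomposable projective left $A$-module is either $Ae_G$ or of the form $Ae_0\otimes_{A_0}P$ for an indecomposable projective left $A_0$-module $P$; since $Ae_0\otimes_{A_0}-$ is a triangle functor and the thick subcategory generated by $T_0$ equals all of $\kK^b(A_0\mathsf{-proj})$, the thick subcategory generated by $T$ exhausts $\kK^b(A\mathsf{-proj})$.

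It remains to identify $\End(T)^{\op}$ with $B$. Assembling the computations above,
\[
\End_{\kK^b(A\mathsf{-proj})}(T)\;\cong\;
\begin{pmatrix}
\End_{A_0}(T_0) & 0 \\
\Hom_{A_0}(T_0,N) & \End_A(Ae_G)
\end{pmatrix},
\]
and the isomorphisms $\End_{A_0}(T_0)\cong B_0^{\op}$, $\End_A(Ae_G)\cong G^{\op}$, and $\Hom_{\kD^b(A_0\lmod)}(T_0,N)\cong \Hom_{\kD^b(B_0\lmod)}(B_0,N')\cong N'$ (the latter via $\Phi_0$) turn the opposite of this lower-triangular matrix algebra into the upper-triangular matrix algebra $B$. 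The step I expect to be the main obstacle is checking that under this identification the $B_0$-$G$-bimodule structure on the off-diagonal entry $N'$ coincides with the given one: the right $G$-action comes from post-composition on $\Hom_A(X,Ae_G)$ with $\End_A(Ae_G)^{\op}=G$, which under the adjunction and $\Phi_0$ is transported to the given right $G$-action on $N'$ inherited from $N$; the left $B_0$-action comes from pre-composition on $\Hom_{A_0}(T_0,N)$ with $\End_{A_0}(T_0)^{\op}=B_0$, which under $\Phi_0$ becomes pre-composition on $\Hom_{B_0}(B_0,N')$ and, via the canonical identification $\Hom_{B_0}(B_0,N')\cong N'$ sending $\psi\mapsto\psi(1_{B_0})$, matches the given left $B_0$-action.
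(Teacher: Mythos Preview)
Your proposal is correct and follows essentially the same approach as the paper: both use the adjunction $\Hom_A(Ae_0\otimes_{A_0}-,\,-)\cong\Hom_{A_0}(-,\,e_0(-))$ together with the identities $e_0Ae_0=A_0$, $e_0Ae_G=N$, $e_GAe_0=0$, $e_GAe_G=G$ to compute the four blocks of $\Hom(T,T[i])$, then verify generation via the exact functor $Ae_0\otimes_{A_0}-$, and finally identify $\End(T)^{\op}$ with $B$. The only organisational difference is that the paper sets up the $B_0$-$G$-bimodule structure on $N'$ at the start (via the chain $G^{\op}\hookrightarrow\End_{A_0}(N)\xrightarrow{\Phi_0}\End_{B_0}(N')$), whereas you return to this compatibility at the end; the content is the same.
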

\begin{proof}
The $A_0$-$G$-bimodule structure on $N$ gives rise to a morphism of $k$-algebras
\begin{align*}
\xymatrix{
G^\op \ar@{^{(}->}[r] \ar@{-->}[dr] & \End_{A_0\lmod}(N) \ar[r]^-\cong & \End_{\kD^b(A_0\lmod)}(N[0]) \ar[d]_\cong^{\Phi_0} \\
 & \End_{B_0\lmod}(N') \ar[r]^-\cong & \End_{\kD^b(B_0\lmod)}(N'[0]) \\
}
\end{align*}
which induces a morphism $G^\op\longrightarrow\End_{B_0\lmod}(N')$ and hence a $B_0$-$G$-bimodule structure on $N'$.\\
Recall that we have an adjoint pair
\begin{align}\label{eq:adjunctionOnePointExtension}
\xymatrix@C=3pc
{
A_0\lmod \ar@/^1pc/[rr]^{Ae_0\otimes_{A_0}-} & \perp & A\lmod, \ar@/^1pc/[ll]^{\Hom_{A\lmod}(Ae_0,-)}
}
\end{align}
where the left adjoint is fully faithful since the unit is an isomorphism. Moreover, the functor $Ae_0\otimes_{A_0}-$ restricts to a functor $A_0\mathsf{-proj}\longrightarrow A\mathsf{-proj}$, because its right adjoint is exact, and induces a fully-faithful functor
\begin{align*}
Ae_0\otimes_{A_0}-:\quad\kK^b(A_0\mathsf{-proj})\longrightarrow \kK^b(A\mathsf{-proj}).
\end{align*}
Consequently, we obtain
\begin{align*}
\Hom_{\kK^b(A\mathsf{-proj})}(Ae_0\otimes T_0,Ae_0\otimes T_0[n])
\cong \Hom_{\kK^b(A_0\mathsf{-proj})}(T_0,T_0[n])
\cong\left\{
\begin{array}{ll} B_0^\op, & \text{if } n=0,\\ 0, & \text{if } n\neq 0. \end{array}
\right.
\end{align*}
Since $Ae_G[0]\in\kK^b(A\mathsf{-proj})$ is a stalk complex, morphisms from $Ae_0\otimes T_0$ into it can be identified with the $0$-th cohomology of the complex $\Hom_{A\lmod}(Ae_0\otimes T_0, Ae_G)$. More generally, for any $n\in\ZZ$ we get
\begin{align*}
\Hom_{\kK^b(A\mathsf{-proj})}&(Ae_0\otimes T_0,Ae_G[n])
\cong \sH^{-n}(\Hom_{A\lmod}(Ae_0\otimes T_0, Ae_G))\\
&\stackrel{\eqref{eq:adjunctionOnePointExtension}}{\cong} \sH^{-n}(\Hom_{A\lmod}(T_0, \Hom_{A\lmod}(Ae_0,Ae_G))) \\
&\stackrel{\phantom{\eqref{eq:adjunctionOnePointExtension}}}{\cong} \sH^{-n}(\Hom_{A\lmod}(T_0, N))
\cong \Hom_{\kD^b(A_0\lmod)}(T_0,N[n])\\
&\stackrel{\phantom{\eqref{eq:adjunctionOnePointExtension}}}{\cong} \Hom_{\kD^b(B_0\lmod)}(\Phi_0(T_0),\Phi_0(N)[n])
\cong \Hom_{\kD^b(B_0\lmod)}({}_{B_0}B_0,N'[n]) \\
&\stackrel{\phantom{\eqref{eq:adjunctionOnePointExtension}}}{\cong} \Ext_{B_0}^n({}_{B_0}B_0,N')
\cong \left\{ \begin{array}{ll} N', & \text{if } n=0,\\ 0, & \text{if } n\neq 0. \end{array} \right.
\end{align*}
Another useful observation is that
\begin{align*}
\Hom_{A\lmod}(Ae_G,Ae_0\otimes_{A_0}X)\cong \underbrace{e_GAe_0}_{=0}\otimes_{A_0}X = 0
\qquad\forall X\in A_0\lmod,
\end{align*}
which implies that
\begin{align*}
\Hom_{\kK^b(A\mathsf{-proj})}&(Ae_G[0],Ae_0\otimes T_0[n]) = 0 \qquad\forall n\in \ZZ.
\end{align*}
Finally, it is clear that
\begin{align*}
\Hom_{\kK^b(A\mathsf{-proj})}&(Ae_G[0],Ae_G[n])
\cong \Ext_{A}^n(Ae_G,Ae_G)
\cong \left\{ \begin{array}{ll} G^\op, & \text{if } n=0,\\ 0, & \text{if } n\neq 0. \end{array} \right.
\end{align*}
This finishes the proof that $\Hom_{\kK^b(A\mathsf{-proj})}(T,T[n])=0$ whenever $n\neq 0$. We have also already shown that
\begin{align*}
\End_{\kK^b(A\mathsf{-proj})}(T)^\op\cong \begin{pmatrix} B_0^\op & 0 \\ N' & G^\op \end{pmatrix}^\op
\cong \begin{pmatrix} B_0 & N' \\ 0 & G \end{pmatrix}=B.
\end{align*}
It remains to verify the generation property for $T$. Let $\kT\subseteq\kK^b(A\mathsf{-proj})$ be the triangulated subcategory generated by $\add(T)$. Since $\add(T_0)$ generates $\kK^b(A_0\mathsf{-proj})$ as a triangulated category and the functor $Ae_0\otimes_{A_0}-$ maps $A_0\mathsf{-proj}$ to $\add(Ae_0)$, it follows that $\add(Ae_0)[0]$ is contained in $\kT$. As $Ae_G[0]$ is a direct summand of $T$, the subcategory $\kT$ contains $Ae_G[0]$ and hence $A\mathsf{-proj}[0]$. We can conclude that that $\kT=\kK^b(A\mathsf{-proj})$ and $T$ is a tilting complex.
\end{proof}

\begin{remark}
As already mentioned, the statement of Theorem \ref{thm:BarotLenzing} is a bit stronger than the original formulation in \cite{BarotLenzingOnePointExtension}. The difference is that Barot and Lenzing stated it for the case $G=k$. In other words, the endomorphism algebra of the simple module corresponding to the extending vertex is the base field. However, it is harmless to assume that it is any finite-dimensional division algebra. We can even go one step further and leave the setting of one-point-extension algebras. If $G$ is any finite-dimensional $k$-algebra, then Theorem \ref{thm:BarotLenzing} with the same proof is still valid.
\end{remark}

We now come back to the study of the squid algebra and the Coxeter-Dynkin algebra and therefore use all the notation introduced in Section \ref{section:squidAndCDalgebra}.
Let $A_0$ and $B_0$ be the (hereditary) tensor algebras of the following $k$-species.
\begin{align*}
\xymatrix@R=1pc
{
D_1 \ar[r]^{D_1} & \dots & \dots \ar[r]^{D_1} & D_1 \ar[rdd]^{U_1} &  \\
D_2 \ar[r]^{D_2} & \dots & \dots \ar[r]^{D_2} & D_2 \ar[rd]_{U_2} &  \\
\vdots &  & \vdots &  & F \\
\vdots &  & \vdots &  &  \\
D_t \ar[r]^{D_t} & \dots & \dots \ar[r]^{D_t} & D_t \ar[ruu]_{U_t} &  \\
}
\qquad\qquad\qquad
\xymatrix@R=1pc
{
D_1 \ar[r]^{D_1} & \dots & \dots \ar[r]^{D_1} & D_1   \\
D_2 \ar[r]^{D_2} & \dots & \dots \ar[r] & D_2   \\
\vdots &  &  F \ar[ruu]^(.7){U_1^\vee} \ar[ru]_{U_2^\vee} \ar[rdd]_{U_t^\vee} &  \vdots   \\
\vdots &  &   & \vdots &   \\
D_t \ar[r]^{D_t} & \dots & \dots \ar[r]^{D_t} & D_t   \\
}
\end{align*}
In particular, these are subspecies of the underlying species of $A$ resp.~$B$ (where we just deleted one vertex each). One can see that the species on the right arises from the one on the left via a single reflection at the unique sink. Accordingly, we get an adjoint pair $(\SS^-,\SS^+)$ of reflection functors. 
For the general theory of reflection functors we refer to \cite[§2]{DlabRingel}. It is shown in \cite[Prop.~5.1]{APRTilting} that $\SS^+\cong \Hom_{B_0}({}_{B_0}X_{A_0},-)$ where $X\in B_0\lmod$ is the APR-tilting module
\begin{align*}
X=B_0(1-e_F)\oplus \tau^{-1}(B_0e_F)~~\in B_0\lmod
\end{align*}
whose opposite endomorphism algebra is isomorphic to $A_0$. We use the same notation for the primitive idempotents in $B_0$ corresponding to the respective vertices as in $A_0$. Note that $B_0e_F$ is a simple, projective left $B_0$-module, so that $X$ is indeed a tilting module. Let $A_0\lmod^*$ resp.~$B_0\lmod^*$ denote the subcategories of all modules without direct summands isomorphic to the simple module at the vertex labelled by the division algebra $F$. Then, the reflection functors $\SS^-$ and $\SS^+$ restrict to mutually inverse equivalences as depicted in the following diagram:
\begin{align}
\label{eq:reflectionFunctors}
\begin{minipage}{12cm}
\xymatrix@C=3pc@R=2pc
{
A_0\lmod \ar@/^/[rr]^{\SS^-} & \perp & B_0\lmod \ar@/^/[ll]^{\SS^+} \\
A_0\lmod^* \ar@/^/[rr] \ar@{_{(}->}[u] \ar@{^{(}->}[d] & \cong & B_0\lmod^* \ar@/^/[ll] \ar@{_{(}->}[u] \ar@{^{(}->}[d] \\
\kD^b(A_0\lmod) \ar@/^/[rr]^{\Phi_0} & \cong & \kD^b(B_0\lmod) \ar@/^/[ll] \\
}
\end{minipage}
\end{align}
A different perspective on the equivalence in the second row of \eqref{eq:reflectionFunctors} is provided by the theory of torsion pairs and the Brenner-Butler theorem (see e.~g.~\cite[§VI]{AssemSimsonSkowronski}). The subcategory $B_0\lmod^*$ equals the torsion class
\begin{align*}
\kT(X)&=\left\{Y\in B_0\lmod~\vert~\Ext_{B_0}^1(X,Y)=0\right\}\\
&=\Gen(X)=\left\{Y\in B_0\lmod~\vert~\exists~X^{\oplus m}\twoheadrightarrow Y\right\}
\end{align*}
generated by the tilting module $X$. Since $\Ext_{B_0}^1(X,\mathcal{T}(X))=0$, one can show that the derived equivalence induced by $\SS^+$ (or by the isomorphic functor $\Hom_{B_0}(X,-)$) restricts to an equivalence between the torsion class $B_0\lmod^*$ in $B_0\lmod$ and a corresponding torsion free class $A_0\lmod^*$ in $A_0\lmod$. More precisely, every object in $\kT(X)$ has an $\add(X)$-resolution \cite[VI.2.6]{AssemSimsonSkowronski} which gets mapped to a projective resolution of an $A_0$-module under the functor $\Hom_{B_0}(X,-)$.
Either way, we can deduce that the equivalence $\Phi_0$ maps all $A_0$-modules in $A_0\lmod^*$, viewed as complexes concentrated in degree 0, to complexes concentrated in degree 0, namely to the $B_0$-modules in $B_0\lmod^*$. This is the key ingredient for the next result:
\begin{theorem}\label{thm:tiltSquidToOctopus1ptext}
Suppose that the conditions from proposition \ref{Prop:OctopusCondition} are satisfied, then the complex
\begin{align*}
\sD(Ae_G)[0] ~\oplus~ \bop_{i=1}^t\bop_{j=1}^{p_i-1} \sD(Ae_i(j))[0] ~\oplus~ \tp(e_FA)[1]\quad\in \kD^b(\rmod A)
\end{align*}
is a tilting complex whose endomorphism algebra is isomorphic to $B$.
\end{theorem}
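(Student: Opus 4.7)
The plan is to deduce the result from Theorem \ref{thm:BarotLenzing} applied to the hereditary subalgebras $A_0 = e_0Ae_0$ and $B_0 = e_0Be_0$, together with the reflection equivalence $\Phi_0$ of diagram \eqref{eq:reflectionFunctors}. Setting $N = e_0Ae_G$ and $N' = e_0Be_G$, one has the one-point-extension descriptions
\[
A = \begin{pmatrix} A_0 & N \\ 0 & G \end{pmatrix}, \qquad B = \begin{pmatrix} B_0 & N' \\ 0 & G \end{pmatrix},
\]
so the hypotheses of Theorem \ref{thm:BarotLenzing} are met once one verifies that $\Phi_0(N)$ is concentrated in degree zero and is isomorphic to $N'$ as a $B_0$-$G$-bimodule. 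The first claim is nothing but condition (5) of Proposition \ref{Prop:OctopusCondition}: $N$ has no direct summand isomorphic to $\tp(A_0e_F)$, hence it lies in the torsion-free class $A_0\lmod^*$ on which $\Phi_0$ restricts to an equivalence of module categories. The second is a direct computation with the reflection functor $\SS^-$: viewing $N$ as a representation of the $A_0$-species, the reflection at the unique sink $F$ replaces the vector space $M$ at $F$ by the cokernel of
\[
\theta_0\colon\; {}_FM_G \longrightarrow \bop_{i=1}^t U_i^\vee \otimes_{D_i} V_i,
\]
which is the bimodule ${}_FW_G$ from Definition \ref{Def:CoxeterDynkinAlgebra}; tracking the surrounding data identifies the output with $N'$. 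Theorem \ref{thm:BarotLenzing} then produces a tilting complex
\[
T^{\ell} = (Ae_0\otimes_{A_0}T_0)\oplus Ae_G[0] \in \kK^b(A\mathsf{-proj})
\]
with $\End(T^{\ell})^\op \cong B$, where $T_0$ is the APR-style tilting complex over $A_0$ realising $\Phi_0$.

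To translate this left-module statement into the right-module statement of the theorem, I would apply the $k$-duality $\sD = \Hom_k(-,k)$, which exchanges left and right modules and sends projectives to injectives. The summand $Ae_G[0]$ dualises to the stalk complex $\sD(Ae_G)[0]$; the arm vertices are untouched by the reflection at $F$, so the corresponding summands of $Ae_0\otimes_{A_0}T_0$ are the projectives $Ae_i(j)$ in degree zero and dualise to $\sD(Ae_i(j))[0]$; and the APR-reflection summand of $T_0$ at $F$ is a two-term complex of projectives whose $\sD$-dual is quasi-isomorphic to the stalk complex $\tp(e_FA)[1]$ concentrated in degree $+1$. Since $\sD$ preserves the endomorphism algebra up to opposite, this gives the tilting complex stated in the theorem, with $\End(T)\cong B$.

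The main technical difficulty will be the bimodule identification $\Phi_0(N)\cong N'$: one must match not only the underlying $B_0$-module structure but also the residual right $G$-action, which amounts to verifying -- via the universal property of the cokernel $W$ -- that the multiplication rule on $B$ predicted by Theorem \ref{thm:BarotLenzing} coincides with the one written down in Definition \ref{Def:CoxeterDynkinAlgebra}. This is exactly where the standing assumption from Proposition \ref{Prop:OctopusCondition} becomes essential, since the injectivity of $\theta_0$ is what makes the cokernel projection onto $W$ genuinely capture the squid multiplication relating $M$ with the $U_i$ and $V_i$.
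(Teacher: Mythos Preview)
Your proposal is correct and follows essentially the same route as the paper: apply the Barot--Lenzing theorem to the one-point-extension descriptions of $A$ and $B$, use condition (5) of Proposition~\ref{Prop:OctopusCondition} to ensure $\Phi_0(N)$ is concentrated in degree zero, identify $\SS^-(N)\cong N'$ via the cokernel $W$ of $\theta_0$, and finally dualise via $\sD$ to pass from left to right modules. The only place where the paper is more explicit than your sketch is the computation of $T_0=\Phi_0^{-1}({}_{B_0}B_0)$: since $F$ is a \emph{sink} in the $A_0$-species (not a source), there is no APR tilting module over $A_0$ at this vertex, and the paper instead computes $\Phi_0^{-1}(B_0e_F[0])$ by taking a projective resolution of $\tau^{-1}(B_0e_F)$ and transporting it through $\Hom_{B_0}(X,-)$, arriving at the two-term complex quasi-isomorphic to $\tp(A_0e_F)[-1]$; you assert this outcome without carrying out the transpose calculation. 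Also note a small convention slip: under the cohomological shift $\tp(e_FA)[1]$ sits in degree $-1$, not $+1$.
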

\begin{proof}
As we have already pointed out, the stalk complex $N[0]\in\kD^b(A_0\lmod)$ is mapped under $\Phi_0$ to a complex concentrated in degree 0 if and only if $N\in\ A_0\lmod^*$, meaning $N$ has no direct summand isomorphic to $\tp(A_0e_F)$. But this is one of the equivalent conditions from proposition \ref{Prop:OctopusCondition}. Hence, Theorem \ref{thm:BarotLenzing} implies that
\begin{align*}
T=\Phi_0^{-1}({}_{B_0}B_0[0])\oplus Ae_G[0]\in\kD^b(A\lmod)
\end{align*}
is a tilting complex. It remains to identify $\Phi_0(N[0])$ with the left $B_0$-module $e_0Be_G$ and to compute the tilting complex $T_0=\Phi_0^{-1}\left({}_{B_0}B_0[0]\right)$ corresponding to the equivalence $\Phi_0$.\\
By definition of the reflection functor $\SS^-$, we have $e_i(j)(\SS^-(N))=e_i(j)N=V_i$ for all $1\leq i\leq t$ and $1\leq j\leq p_i-1$. Moreover, $e_F(\SS^-(N))$ fits into an exact sequence of left $F$-modules
\begin{align*}
\xymatrix{
& e_FN \ar[r] \ar@{=}[d] & \bop_{i=1}^t \Hom_{D_i}(U_i,D_i)\otimes_{D_i}Ne_i(j) \ar[r] \ar[d]^\cong & e_F(\SS^-(N)) \ar[r] \ar@{.>}[d]^{\cong} & 0 \\
0\ar[r] & M \ar[r]^-{\theta_0} & \bop_{i=1}^t U_i^\vee\otimes_{D_i}V_i \ar[r] & W \ar[r] & 0,
}
\end{align*}
showing that $e_F(\SS^-(N))\cong \cok(\theta_0)=W$. In regard of Definition \ref{Def:CoxeterDynkinAlgebra}, we can conclude that $\SS^-(N)\cong e_0Be_G$.\\
In order to compute $T_0$, we first note that $\Phi_0^{-1}$ sends $\add(X[0])$ to $A_0\mathsf{-proj}[0]$ and in particular $B_0(1-e_F)[0]$ to $A_0(1-e_F)[0]$. To see what happens to the remaining indecomposable direct summand $B_0e_F$, we compute its inverse Auslander-Reiten translate. We use that $\tau^{-1}(B_0e_F)\cong\mathsf{Tr}(\sD(B_0e_F))\cong\mathsf{Tr}(\tp(e_FB_0))$ and that
\begin{align}\label{eq:projPresTop(e_FB_0)}
\bop_{i=1}^tU_i^\vee\otimes_{D_i}e_i(1)B_0\stackrel{\pi}{\longrightarrow} e_FB_0 \longrightarrow \tp(e_FB_0)\longrightarrow 0
\end{align}
is a minimal projective presentation of the right $B_0$-module $\tp(e_FB_0)$ if we define $\pi$ as the morphism corresponding to $(\id_{U_i^\vee})_{1\leq i\leq t}$ under the isomorphism
\begin{align*}
\Hom_{B_0}\left(\bop_{i=1}^tU_i^\vee\otimes_{D_i}e_i(1)B_0, e_FB_0\right)
\cong \bop_{i=1}^t \Hom_{D_i}(U_i^\vee, \underbrace{\Hom_{B_0}(e_i(1)B_0,e_FB_0)}_{\cong U_i^\vee}).
\end{align*}
Applying the functor $\Hom_{B_0}(-,B_0)$ to \eqref{eq:projPresTop(e_FB_0)} yields
\begin{align*}
\xymatrix{
& B_0e_F \ar[r] \ar@{=}[dd] & \Hom_{B_0}\left(\bop_{i=1}^tU_i^\vee\otimes_{D_i}e_i(1)B_0, B_0\right) \ar[r] \ar[d]^\cong & \mathsf{Tr}(\tp(e_FB_0)) \ar[r] \ar@{-->}[dd]^\cong & 0 \\
& & \bop_{i=1}^t\Hom_{D_i}\left(U_i^\vee, \Hom_{B_0}(e_i(1)B_0,B_0)\right) \ar[d]^\cong & & \\
0 \ar[r] & B_0e_F \ar[r] & \bop_{i=1}^t B_0e_i(1)\otimes_{D_i}\underbrace{\left(U_i^\vee\right)^\vee}_{\cong U_i} \ar[r] & \tau^{-1}(B_0e_F) \ar[r] & 0.
}
\end{align*}
The bottom row is automatically a minimal projective presentation of the left $B_0$-module $\tau^{-1}(B_0e_F)$. Thus, it must be a short exact sequence since $B_0$ is hereditary. Replacing the stalk complex $B_0e_F[0]$ by its projective resolution, we can deduce that it gets mapped under $\Phi_0^{-1}$ to the complex
\begin{align}\label{eq:complexPhi^-1(B_0e_F)}
\bop_{i=1}^t A_0e_i(1)\otimes_{D_i}\left(U_i^\vee\right)^\vee \longrightarrow A_0e_F\qquad\in\kD^b(A_0\lmod)
\end{align}
concentrated in degrees 0 and 1. Note that the morphism in \eqref{eq:complexPhi^-1(B_0e_F)} is a monomorphism. More precisely, it is the radical inclusion of the projective module $A_0e_F$ so that its cokernel is the simple module $\tp(A_0e_F)$ and $\Phi_0^{-1}(B_0e_F[0])\cong \tp(A_0e_F)[-1]$. Summing up, we showed that
\begin{align*}
T~=~ \tp(Ae_F)[-1] ~\oplus~ \bop_{i=1}^t\bop_{j=1}^{p_i-1} Ae_i(j)[0] ~\oplus~ Ae_G[0] \quad\in \kD^b(A\lmod)
\end{align*}
is a tilting complex with $\End(T)^\op\cong B$. Since the algebra $A$ has finite global dimension, the contravariant equivalence
\begin{align*}
\kD^b(A\lmod)\cong\kK^b(A\mathsf{-proj})\stackrel{\sD}{\longrightarrow}\kK^b(\mathsf{inj-}A)\cong\kD^b(\rmod A)
\end{align*}
maps the tilting complex $T\in\kD^b(A\lmod)$ to the tilting complex
\begin{align*}
&\sD(T)~=~\sD(Ae_G)[0] ~\oplus~ \bop_{i=1}^t\bop_{j=1}^{p_i-1} \sD(Ae_i(j))[0] ~\oplus~ \tp(e_FA)[1]\quad\in \kD^b(\rmod A) \\
&\text{with}\qquad 
\End_{\kD^b(\rmod A)}(\sD(T))\cong \End_{\kD^b(A \lmod)}(T)^\op \cong B,
\end{align*}
which finishes the proof.
\end{proof}

\section{Canonical algebras}\label{section:canonicalAlgebras}

In this section, we introduce the most famous representative in the derived equivalence class of the squid algebra and the Coxeter-Dynkin algebra of canonical type, namely the canonical algebra as defined in \cite{Ringel}.
We start with the constructions which are required for the definition of a canonical algebra. Our choice of a tame bimodule, weights and exceptional points as in Section \ref{section:squidAndCDalgebra} is still fixed. We define $F$-$D_i$-bimodules $U_i^*$ and $D_i$-$G$-bimodules $V_i^+$ as follows:
\begin{align*}
U_i^* &:= \Hom_F\left({}_{D_i}(U_i)_F,{}_FF_F\right) \\
V_i^+ &:= \ker\left(U_i\otimes_FM\stackrel{\rho_i}{\longrightarrow} V_i\right)
\end{align*}
Note that the morphism $\rho$ is necessarily surjective for every regular representation $(U,V,\rho)$ of $M$, since otherwise there would exist a preinjective subrepresentation, namely $(U, \im(\rho), \rho)$. However, this is not possible since $\Hom(\kI,\kR)=0$. Therefore, we have a short exact sequence of $D_i$-$G$-bimodules
\begin{align}\label{eq:SESdefV+}
0\longrightarrow V_i^+\longrightarrow U_i\otimes_F M \stackrel{\rho_i}{\longrightarrow} V_i \longrightarrow 0.
\end{align}
For every $1\leq i \leq t$, we define a morphism $\tilde{\rho}_i$ of $F$-$G$-bimodules as the following composition:
\[
\xymatrix{
U_i^*\otimes_{D_i} V_i^+ \ar[rrrr]^-{\tilde{\rho}_i} \ar[dr]_{\id_{U_i^*}\otimes\incl} & & & & M \\
& U_i^* \otimes_{D_i}  U_i\otimes_F M \ar[rr]_-{\ev\otimes\id_M} & & F \otimes_F M \ar[ur]_-{\can}^-\cong
}
\]

\begin{definition}
The \emph{canonical algebra} associated to the tame bimodule ${}_FM_G$, weights $p_1,\dots,p_t$ and exceptional points $\rho_1,\dots,\rho_t$ is the finite-dimensional $k$-algebra
\begin{align*}
C := 
\left(
\begin{array}{c|ccc|c|ccc|c}
F & U_1^* & \cdots & U_1^* & \cdots & U_t^* & \cdots & U_t^* & M \\
\hline
 & D_1 & \cdots & D_1 & 0 & 0 & \cdots & 0 & V_1^+ \\
 &  & \ddots & \vdots & \vdots & \vdots &  & \vdots & \vdots \\
 &  &  & D_1 & 0 &  0 & \cdots & 0 & V_1^+ \\
\hline
 &  &  &  & \ddots & 0 & \cdots & 0 & \vdots \\
\hline
 &  &  &  &  & D_t & \cdots & D_t & V_t^+ \\
 &  &  &  &  &  & \ddots & \vdots & \vdots \\
 &  &  &  &  &  &  & D_t & V_t^+ \\
\hline
  &  &  &  &  &  &  &  & G 
\end{array}
\right),
\end{align*}
where $D_i$ appears precisely $p_i-1$ times on the diagonal. The multiplication in $A$ is given by the bimodule structures over the division algebras on the diagonal respectively by the $F$-$G$-bimodule homomorphisms $\tilde{\rho}_i:~U_i^*\otimes_{D_i} V_i^+\longrightarrow M$.
\end{definition}

\begin{remark}\label{rmk:CanonicalAlgebraSpecies}
The canonical algebra is a quotient of the tensor algebra of the $k$-species
\[
\xymatrix@R=1pc
{
 & D_1 \ar[r]^{D_1}  & D_1 \ar[r]^{D_1} & \dots & \dots \ar[r]^{D_1} & D_1 \ar[rdd]^{V_1^+} &   \\
 & D_2 \ar[r]^{D_2}  & D_2 \ar[r]^{D_2} & \dots & \dots \ar[r]^{D_2} & D_2 \ar[rd]_{V_2^+} &   \\
F \ar[uur]^{U_1^*}  \ar[ur]_{U_2^*}  \ar[ddr]_{U_t^*} \ar@/_1pc/[rrrrrr]_M 
 &   & \vdots &  & \vdots &  &  G \\
 &   & \vdots &  & \vdots &  &   \\
 & D_t \ar[r]^{D_t}  & D_t \ar[r]^{D_t} & \dots & \dots \ar[r]^{D_t} & D_t \ar[ruu]_{V_t^+} &   \\
}
\]
where the $i$-th arm has $p_i-1$ vertices. In fact the edge with the bimodule ${}_FM_G$ can be dropped in this species if and only if the map
\begin{align}
\left(\tilde{\rho}_1,\dots, \tilde{\rho}_t\right):\quad\bigoplus_{i=1}^t U_i^*\otimes_{D_i} V_i^+ \longrightarrow M
\end{align}
is surjective, which in turn is equivalent to the equivalent conditions from proposition \ref{Prop:OctopusCondition}. This can be proven similarly as the equivalence of (1) and (6) in proposition \ref{Prop:OctopusCondition} and has already been observed by Ringel \cite[§2.3]{Ringel}.
\end{remark}

\begin{example}
The algebra $C$ from Example \ref{example:complexNumbers} is precisely the canonical algebra associated to the respective input data. Hence, in this example the canonical algebra is a tame hereditary algebra. One can check that this always happens when we have equality in condition (6) of proposition \ref{Prop:OctopusCondition}. 
\end{example}

In order to build the bridge from the squid to the canonical algebra, we adopt the strategy from Section \ref{section:CDalgebraViaOnePointExtension}. For this, let $A_0$ (as before) and $C_0$ be the tensor algebras of the following $k$-species:
\begin{align*}
\xymatrix@R=1pc
{
D_1 \ar[r]^{D_1} & \dots & \dots \ar[r]^{D_1} & D_1 \ar[rdd]^{U_1} &  \\
D_2 \ar[r]^{D_2} & \dots & \dots \ar[r]^{D_2} & D_2 \ar[rd]_{U_2} &  \\
\vdots &  & \vdots &  & F \\
\vdots &  & \vdots &  &  \\
D_t \ar[r]^{D_t} & \dots & \dots \ar[r]^{D_t} & D_t \ar[ruu]_{U_t} &  \\
}
\qquad\qquad
\xymatrix@R=1pc
{
 & D_1 \ar[r]^{D_1}  &  \dots & \dots \ar[r]^{D_1} & D_1  \\
 & D_2 \ar[r]^{D_2}  &  \dots & \dots \ar[r]^{D_2} & D_2   \\
F \ar[uur]^{U_1^*}  \ar[ur]_{U_2^*}  \ar[ddr]_{U_t^*}  
 &  \vdots &  & \vdots &  \\
 &  \vdots &  & \vdots &   \\
 & D_t \ar[r]^{D_t}  &  \dots & \dots \ar[r]^{D_t} & D_t   \\
}
\end{align*}
One can see that the right species arises from the left one by a sequence of $\sum_{i=1}^t\frac{(p_i-1)p_i}{2}$ reflections at source vertices. In particular, we obtain a corresponding composition of reflection functors $\SS^+:A_0\lmod\longrightarrow C_0\lmod$ and its left adjoint $\SS^-$.
Using inductively that reflection functors can be realized as $\Hom$-functors with APR-tilting modules \cite{APRTilting}, we can deduce that $\SS^+\cong \Hom_{A_0}(T_0,-)$ where
\begin{align*}
T_0~=~\bop_{i=1}^t\bop_{j=1}^{p_i-1} \tau^{-j}(A_0e_i(j))~\oplus~A_0e_F\qquad\in A_0\lmod
\end{align*}
is a tilting module with $\End_{A_0}(T_0)^\op\cong C_0$. If $\kT(T_0)=\Gen(T_0)$ denotes the torsion class in $A_0\lmod$ generated by $T_0$ and
\begin{align*}
\kY(T)=\left\{ Z\in C_0\lmod~\big\vert~\Tor_1^{C_0}(T_0,Z)=0 \right\}
\end{align*}
the corresponding torsion-free class in $C_0\lmod$, then we get the following commutative diagram:
\begin{align}
\label{eq:reflectionFunctorsSquidToCanonical}
\begin{minipage}{12cm}
\xymatrix@C=3pc@R=2pc
{
A_0\lmod \ar@/_/[rr]_{\SS^+} & \perp & C_0\lmod \ar@/_/[ll]_{\SS^-} \\
\kT(T_0) \ar@/_/[rr] \ar@{_{(}->}[u] \ar@{^{(}->}[d] & \cong & \kY(T_0) \ar@/_/[ll] \ar@{_{(}->}[u] \ar@{^{(}->}[d] \\
\kD^b(A_0\lmod) \ar@/_/[rr]_{\Phi_0} & \cong & \kD^b(C_0\lmod) \ar@/_/[ll] \\
}
\end{minipage}
\end{align}
By the Brenner-Butler theorem,
we have equivalences in the second row. More importantly for us, by this diagram all modules in $\kT(T_0)$ viewed as stalk complexes in degree 0 are mapped to stalk complexes in degree 0 under the equivalence $\Phi_0$. Hence, we arrive at the following result by Ringel \cite[§4]{Ringel} and can present it from a different perspective.

\begin{theorem}\label{thm:CanonicalTiltingModule}
The right $A$-module
\begin{align*}
T~=\sD(Ae_G)~\oplus~\bop_{i=1}^t\bop_{j=1}^{p_i-1} \tau^{j}\left(\sD(Ae_i(j))\right)~\oplus~\sD(Ae_F)
\end{align*}
viewed as a complex concentrated in degree 0 is a tilting complex in $\kD^b(\rmod A)$ with $\End(T)\cong C$. Moreover it satisfies the dual conditions of a classical tilting module.
\end{theorem}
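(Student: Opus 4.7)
The plan is to apply Theorem \ref{thm:BarotLenzing} to the derived equivalence $\Phi_0 : \kD^b(A_0\lmod) \to \kD^b(C_0\lmod)$ from diagram \eqref{eq:reflectionFunctorsSquidToCanonical} and then dualize, paralleling the proof of Theorem \ref{thm:tiltSquidToOctopus1ptext} but with $C_0$ in place of $B_0$. The first step is to verify that $N = e_0 A e_G \in A_0\lmod$ lies in the torsion class $\kT(T_0)$, so that $\Phi_0(N[0])$ is concentrated in degree zero. This boils down to the vanishing $\Ext^1_{A_0}(T_0, N)=0$, which I would check via Auslander--Reiten duality using the explicit form of the summands $\tau^{-j}(A_0 e_i(j))$ of $T_0$; along each $D_i$-arm the relevant condition should reduce to the assumption from Proposition \ref{Prop:OctopusCondition} that $N$ has no direct summand isomorphic to $\tp(A_0 e_F)$.

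Once the hypothesis of Theorem \ref{thm:BarotLenzing} is in place, it produces a tilting complex $\tilde T = (Ae_0 \otimes_{A_0} T_0) \oplus Ae_G[0] \in \kK^b(A\mathsf{-proj})$ with $\End(\tilde T)^{\op}$ isomorphic to the one-point extension of $C_0$ along $\Phi_0(N)$. To identify this extension with $C$, I would compute $\Phi_0(N)$ by tracking the reflection functor $\SS^+$ arm by arm: the $p_i-1$ successive reflections along the $i$-th arm should turn the $V_i$-component of $N$ into a $V_i^+$-component via the short exact sequence \eqref{eq:SESdefV+}, and the composition across arms should reproduce the bimodule structure maps $\tilde\rho_i : U_i^* \otimes_{D_i} V_i^+ \to M$ defining $C$.

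Because $A$ has finite global dimension, the contravariant equivalence $\sD : \kK^b(A\mathsf{-proj}) \xrightarrow{\cong} \kK^b(\mathsf{inj-}A) \cong \kD^b(\rmod A)$ then turns $\tilde T$ into a tilting complex $\sD(\tilde T)$ in $\kD^b(\rmod A)$ with endomorphism algebra $(\End(\tilde T)^{\op})^{\op} \cong C$. The summands $\sD(Ae_G)$ and $\sD(Ae_F)$ arise immediately from $Ae_G[0]$ and $Ae_0 \otimes_{A_0} A_0 e_F$; for the remaining summands I would show $\sD(Ae_0 \otimes_{A_0} \tau^{-j}(A_0 e_i(j))) \cong \tau^j(\sD(Ae_i(j)))$ by taking a minimal projective resolution of $\tau^{-j}(A_0 e_i(j))$ in $A_0\lmod$, applying $\sD(Ae_0 \otimes_{A_0} -)$ termwise, and recognising the resulting two-term complex of injective right $A$-modules as a minimal injective coresolution of $\tau^j(\sD(Ae_i(j)))$ in $\rmod A$. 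All summands being then concentrated in degree zero shows that $\sD(\tilde T)$ is quasi-isomorphic to the module $T$ of the statement; dualizing the axioms that $\tilde T$ satisfies as a tilting complex of projective left $A$-modules with $\End$ isomorphic to $C^{\op}$ delivers precisely the dual versions of $(\mathrm{CT1})$--$(\mathrm{CT3})$ for the right $A$-module $T$.

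The main obstacle I anticipate is the identification $\sD(Ae_0 \otimes_{A_0} \tau^{-j}(A_0 e_i(j))) \cong \tau^j(\sD(Ae_i(j)))$. The Auslander--Reiten translate on the non-hereditary one-point extension $A$ does not agree in general with that on the hereditary subalgebra $A_0$, so matching $\tau^j$ on both sides takes some care; I plan to handle this by describing both modules explicitly as species representations vanishing at $e_G$ and by exploiting that the passage from $A_0$ to $A$ only affects the AR-sequences involving the indecomposables adjacent to $Ae_G$ in the AR-quiver of $A$.
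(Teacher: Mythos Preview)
Your overall strategy matches the paper's: apply Theorem~\ref{thm:BarotLenzing} to the equivalence $\Phi_0$ from \eqref{eq:reflectionFunctorsSquidToCanonical}, identify $\Phi_0(N)$ with $e_0Ce_G$ by tracking reflection functors, then dualize.

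There is, however, a genuine misstep in your first step. You claim that the vanishing $\Ext^1_{A_0}(T_0,N)=0$ ``should reduce to the assumption from Proposition~\ref{Prop:OctopusCondition}''. It does not: the membership $N\in\kT(T_0)$ holds \emph{unconditionally}, and the paper's argument makes this transparent. The map $Ae_F\otimes_FM\to Ae_G$ corresponding to $\id_M$ surjects onto $\rad(Ae_G)$ because each $\rho_i$ is surjective; restricting via $\Hom_A(Ae_0,-)$ gives an epimorphism $A_0e_F\otimes_FM\twoheadrightarrow N$, so $N\in\Gen(A_0e_F)\subseteq\Gen(T_0)=\kT(T_0)$. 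In particular Theorem~\ref{thm:CanonicalTiltingModule} holds without the hypotheses of Proposition~\ref{Prop:OctopusCondition}, in contrast to Theorem~\ref{thm:tiltSquidToOctopus1ptext}. (Your proposed check would in fact go through too, since $\tp(A_0e_F)$ is injective in $A_0\lmod$ and hence lies in $\kT(T_0)$ regardless; the point is that invoking Proposition~\ref{Prop:OctopusCondition} is a red herring.)

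For the computation of $\SS^+(N)$, the paper's factorisation $\SS^+=\SS^+_3\SS^+_2\SS^+_1$ is slightly different from your ``arm by arm'' plan: it first reverses all interior arrows of the arms, then reflects simultaneously at the $t$ vertices adjacent to $F$ (this is where the kernel $V_i^+$ from \eqref{eq:SESdefV+} appears), and only then straightens the arms again. Your version would work as well, but isolating the single layer $\SS^+_2$ is what makes the appearance of $V_i^+$ clean.

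Finally, the compatibility $\sD\bigl(Ae_0\otimes_{A_0}\tau^{-j}(A_0e_i(j))\bigr)\cong\tau^{j}\bigl(\sD(Ae_i(j))\bigr)$ that you flag as the main obstacle is indeed a point the paper passes over in one sentence; your plan to compare explicit species representations supported away from $e_G$ is the right way to fill this in.
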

\begin{proof}
Recall that the squid algebra $A$ can be written as the one-point-extension of $A_0$ along the $A_0$-$G$-bimodule $N$. The morphism $Ae_F\otimes_F M\longrightarrow Ae_G$ which corresponds to $\id_M$ under the isomorphism
\begin{align*}
\Hom_A(Ae_F\otimes_F M, Ae_G)\cong \Hom_F(M,\Hom_A(Ae_F,Ae_G)\cong \Hom_F(M,M)
\end{align*}
is an epimorphism onto the radical of $Ae_G$ since the morphisms $\rho_i$ are surjective. Hence, if we restrict it to $A_0$ (meaning if we apply $\Hom_A(Ae_0,-)$ to it) we obtain an epimorphism $A_0\otimes_F M\twoheadrightarrow N$ of left $A_0$-modules. Therefore
\begin{align*}
N\in \Gen(Ae_F) \subseteq \Gen(T_0) = \kT(T_0)
\end{align*}
which means that $\Phi_0(N[0])$ is isomorphic in $\kD^b(C_0\lmod)$ to a stalk complex concentrated in degree 0. Consequently, Theorem \ref{thm:BarotLenzing} implies that $Ae_0\otimes_{A_0}T_0[0]\oplus Ae_G[0]$ is a tilting complex in $\kD^b(A\lmod)$.
It is easy to see that  $Ae_0\otimes_{A_0}T_0\oplus Ae_G$ is even a classical (left) tilting module. It remains to show that the endomorphism algebra is indeed isomorphic to the canonical algebra. For this we need to compute $\SS^+(N)$ which is why we factor the reflection functor according to the following intermediate steps:

\begin{align*}
\begin{minipage}{6cm}
\scalebox{0.7}{
\xymatrix@R=1pc
{
D_1 \ar[r]^{D_1} & \dots & \dots \ar[r]^{D_1} & D_1 \ar[r]^{D_1} & D_1 \ar[rdd]^{U_1} &  \\
D_2 \ar[r]^{D_2} & \dots & \dots \ar[r]^{D_2} & D_2 \ar[r]^{D_2} & D_2 \ar[rd]_{U_2} &  \\
\vdots &  &  &  \vdots &  & F \\
\vdots &  &  &  \vdots &  &  \\
D_t \ar[r]^{D_t} & \dots & \dots \ar[r]^{D_t} & D_t \ar[r]^{D_t} & D_t \ar[ruu]_{U_t} &  \\
}
}
\end{minipage}
\quad
\stackrel{\SS^+_1}{\longrightarrow}
\quad
\begin{minipage}{6cm}
\scalebox{0.7}{
\xymatrix@R=1pc
{
D_1 \ar@{<-}[r]^{D_1} & \dots & \dots \ar@{<-}[r]^{D_1} & D_1 \ar@{<-}[r]^{D_1} & D_1 \ar[rdd]^{U_1} &  \\
D_2 \ar@{<-}[r]^{D_2} & \dots & \dots \ar@{<-}[r]^{D_2} & D_2 \ar@{<-}[r]^{D_2} & D_2 \ar[rd]_{U_2} &  \\
\vdots &  &  &  \vdots &  & F \\
\vdots &  &  &  \vdots &  &  \\
D_t \ar@{<-}[r]^{D_t} & \dots & \dots \ar@{<-}[r]^{D_t} & D_t \ar@{<-}[r]^{D_t} & D_t \ar[ruu]_{U_t} &  \\
}
}
\end{minipage}
\qquad\qquad
\end{align*}
\begin{align*}
\quad
\stackrel{\SS^+_2}{\longrightarrow}
\quad
\begin{minipage}{6cm}
\scalebox{0.7}{
\xymatrix@R=1pc
{
D_1 \ar@{<-}[r]^{D_1} & \dots & \dots \ar@{<-}[r]^{D_1} & D_1 \ar[r]^{D_1} & D_1 \ar@{<-}[rdd]^{U_1^*} &  \\
D_2 \ar@{<-}[r]^{D_2} & \dots & \dots \ar@{<-}[r]^{D_2} & D_2 \ar[r]^{D_2} & D_2 \ar@{<-}[rd]_{U_2^*} &  \\
\vdots &  &  &  \vdots &  & F \\
\vdots &  &  &  \vdots &  &  \\
D_t \ar@{<-}[r]^{D_t} & \dots & \dots \ar@{<-}[r]^{D_t} & D_t \ar[r]^{D_t} & D_t \ar@{<-}[ruu]_{U_t^*} &  \\
}
}
\end{minipage}
\quad
\stackrel{\SS^+_3}{\longrightarrow}
\quad
\begin{minipage}{6cm}
\scalebox{0.7}{
\xymatrix@R=1pc
{
D_1 \ar@{<-}[r]^{D_1} & \dots & \dots \ar@{<-}[r]^{D_1} & D_1 \ar@{<-}[r]^{D_1} & D_1 \ar@{<-}[rdd]^{U_1^*} &  \\
D_2 \ar@{<-}[r]^{D_2} & \dots & \dots \ar@{<-}[r]^{D_2} & D_2 \ar@{<-}[r]^{D_2} & D_2 \ar@{<-}[rd]_{U_2^*} &  \\
\vdots &  &  &  \vdots &  & F \\
\vdots &  &  &  \vdots &  &  \\
D_t \ar@{<-}[r]^{D_t} & \dots & \dots \ar@{<-}[r]^{D_t} & D_t \ar@{<-}[r]^{D_t} & D_t \ar@{<-}[ruu]_{U_t^*} &  \\
}
}
\end{minipage}
\end{align*}
That means that $\SS^+=\SS^+_3\SS^+_2\SS^+_1$, where $\SS^+_1$ is the product of all reflection functors corresponding to vertices inside the arms until the rightmost vertices in the arms are the only source vertices.
Then $\SS^+_2$ is the product of the (commuting) reflection functors associated to these $t$ source vertices. Finally $\SS^+_3$ is the product of all reflection functors at source vertices inside the arms until all arrows are pointing to the left.
Using this, we will compute the image of $N$ under $\SS^+$ step by step. For a left $A_0$-module $X$ and an idempotent $e\in A_0$ the following table shows the left $eA_0e$-module $eX$, where $1\leq i\leq t$ and $2\leq j\leq p_i-1$:
\begin{align*}
\begin{array}{|l|c|c|c|c|}
\hline
 & ~N~ & \SS^+_1N & \SS^+_2\SS^+_1N & \SS^+_3\SS^+_2\SS^+_1N \\[+1pt] \hline
e_F & M & M & M & M \\[+1pt] \hline
e_i(1) & V_i & V_i & V_i^+ & V_i^+ \\[+1pt] \hline
e_i(j) & V_i & 0 & 0 & V_i^+ \\[+1pt] \hline
\end{array}
\end{align*}
It is clear that the second row stays unchanged since $\SS^+$ contains no reflection at the rightmost vertex. Moreover, the $\SS^+_1$-action as well as the $\SS^+_3$-action can be easily deduced from the definition of reflection functors. The most interesting part is when $\SS^+_2$ comes into play. The rightmost vertex in the $i$-th arm is a source vertex with two arrows pointing away. The associated maps in $\SS^+_1N$, viewed as a (left) species representation are
\begin{align*}
U_i\otimes_F \overbrace{e_F(\SS^+_1N)}^{=M}&\longrightarrow e_i(1)(\SS^+_1N),\\
D_i\otimes_{D_i} \underbrace{e_i(2)(\SS^+_1N)}_{=0}&\longrightarrow e_i(1)(\SS^+_1N)
\end{align*}
and the left $D_i$-module $e_i(1)(\SS^+_2\SS^+_1N)$ is the kernel in the exact sequence
\begin{align*}
0\longrightarrow e_i(1)(\SS^+_2\SS^+_1N)\longrightarrow
\underbrace{U_i\otimes_F e_F(\SS^+_1N) \,\oplus\, D_i\otimes_{D_i} e_i(2)(\SS^+_1N)}_{=U_i\otimes M}
\longrightarrow \underbrace{e_i(1)(\SS^+_1N)}_{=V_i}.
\end{align*}
We can therefore conclude that $e_i(1)(\SS^+_2\SS^+_1N)\cong V_i^+$. Hence, we have shown that $\SS^+N$ coincides with the left $C_0$-module $e_0Ce_G$ at every vertex. One can verify by direct computation that they are even isomorphic as $C_0$-modules. According to Theorem \ref{thm:BarotLenzing}, the classical left tilting module
\begin{align*}
Ae_0\otimes_{A_0}T_0\oplus Ae_G \qquad \in A_0\lmod
\end{align*}
has an endomorphism algebra isomorphic to
\begin{align*}
\begin{pmatrix}
C_0 & \SS^+(N) \\
0 & G
\end{pmatrix}^\op
\cong C^\op.
\end{align*}
Applying the $k$-dual $\sD=\Hom_k(-,k)$ to this, leads to the right $A$-module from the corollary, which then has all the properties as claimed.
\end{proof}

\section{Tubular symbol and Weyl group}\label{section:SymbolWeylGroup}

In this section, we want to introduce the tubular symbol of a squid algebra, Coxeter-Dynkin algebra resp.~canonical algebra and build the bridge to Lenzing's work on canonical bilinear lattices \cite{LenzingKTheoreticStudy}.
Moreover we present the connection between Coxeter-Dynkin algebras and Saito's work on marked extended affine root systems \cite{SaitoI}.
We start by recalling the central definitions from \cite{LenzingKTheoreticStudy}.
\begin{definition}
A \emph{bilinear lattice} is a free abelian group of finite rank $V$ equipped with a non-degenerate bilinear form such that there exists an isomorphism $\tau:\,V\longrightarrow V$ (called \emph{Coxeter transformation}) satisfying
\begin{align*}
\langle y,x\rangle = -\langle x,\tau y\rangle\qquad\quad \forall\, x,y\in V.
\end{align*}
The \emph{radical} of a bilinear lattice $V$ is the subgroup
\begin{align*}
\rad(V) &= \left\{y\in V~\vert~\forall x\in V:~\langle y,x\rangle = -\langle x,y \rangle~\right\}\\
&= \left\{y\in V~\vert~\tau(y)=y\right\},
\end{align*}
in other words it is the left and right kernel of the \emph{symmetrised bilinear form} (which is defined by $(x,y)=\langle x,y\rangle +\langle y,x\rangle$ for all $x,y\in V$).
\end{definition}
One can show that in this case, the Coxeter transformation $\tau$ is uniquely determined by the bilinear form.
As an example, the Grothendieck group of any finite-dimensional $k$-algebra of finite global dimension equipped with the Euler form is a bilinear lattice. In this case the Coxeter transfomation is given by the action of the derived Auslander-Reiten translation.

\begin{defProp}\label{defProp:CBL}
Given integers $t\geq 1$ and $p_i\geq 2$, $e_i\geq 1$, $f_i\geq 1$ for $1\leq i\leq t$ and $\varepsilon\in\{1,2\}$, we set
\begin{align}\label{eq:d_ikappa}
d_i=e_if_i\qquad\text{and}\qquad\kappa= \lcm\left\{ \frac{e_i}{\gcd(e_i,\varepsilon f_i)}~\bigg\vert~1\leq i\leq t. \right\}.
\end{align}
Let $V$ be the free abelian group with basis
\begin{align*}
a,~w,~ s_i^{(j)}\quad \left(\begin{array}{l} 1\leq i\leq t \\ 0\leq j\leq p_i-2\end{array}\right).
\end{align*}
We equip $V$ with the bilinear form such that the non-vanishing values on the basis elements are
\begin{align*}
\begin{array}{rlrl}
\langle a,a\rangle &= \kappa, & \langle a,s_i^{(0)}\rangle &= \kappa\varepsilon f_i \\[+5pt]
\langle a,w\rangle &= \kappa\varepsilon = -\langle w,a\rangle, & \quad\langle s_i^{(j)},s_i^{(j)}\rangle &= \frac{\kappa\varepsilon f_i}{e_i} = -\langle s_i^{(j-1)},s_i^{(j)}\rangle
\end{array}
\end{align*}
Then, $V$ is a bilinear lattice with $\tau^{j}s_i^{(0)}=s_i^{(j)}$ and $\tau^{p_i}s_i^{(0)}=s_i^{(0)}$ for all $1\leq i\leq t$ and $0\leq j\leq p_i-2$. We will write $s_i=s_i^{(0)}$ for simplicity. Moreover, $V$ is called \emph{canonical bilinear lattice} corresponding to the \emph{symbol}
\begin{align}\label{eq:tubularSymbol}
\sigma = \left(\begin{array}{c|c}
p_1\, p_2\,\dots \, p_t & \\
d_1\, d_2\,\dots \, d_t & \varepsilon\\
f_1\, f_2\,\dots \, f_t & \\
\end{array}\right).
\end{align}
\end{defProp}

Another useful basis for the identification of $V$ with the Grothendieck group of a canonical algebra is the so-called \emph{canonical basis} \cite[§10]{LenzingKTheoreticStudy}
\begin{align*}
a,~ a_i(j) = \sum_{l=1}^j\tau^{-l}s_i+\varepsilon f_ia
\quad \left(\begin{array}{l} 1\leq i\leq t \\ 0\leq j\leq p_i-2\end{array}\right),~
w+\varepsilon a.
\end{align*}
With respect to the canonical basis, the bilinear form on $V$ is given by the matrix
\begin{align}\label{eq:MatrixCanonicalBasis}
\left(
\begin{array}{c|ccc|c|ccc|c}
\kappa & \kappa\varepsilon f_1 & \cdots &\kappa\varepsilon f_1 & \cdots & \kappa\varepsilon f_t & \cdots & \kappa\varepsilon f_t & 2\kappa\varepsilon \\
\hline
 & \frac{\kappa\varepsilon f_1}{e_1} & \cdots & \frac{\kappa\varepsilon f_1}{e_1} & 0 & 0 & \cdots & 0 & \kappa\varepsilon^2 f_1 \\
 &  & \ddots & \vdots & \vdots & \vdots &  & \vdots & \vdots \\
 &  &  & \frac{\kappa\varepsilon f_1}{e_1} & 0 &  0 & \cdots & 0 & \kappa\varepsilon^2 f_1 \\
\hline
 &  &  &  & \ddots & 0 & \cdots & 0 & \vdots \\
\hline
 &  &  &  &  & \frac{\kappa\varepsilon f_t}{e_t} & \cdots & \frac{\kappa\varepsilon f_t}{e_t} & \kappa\varepsilon^2 f_t \\
 &  &  &  &  &  & \ddots & \vdots & \vdots \\
 &  &  &  &  &  &  & \frac{\kappa\varepsilon f_t}{e_t} & \kappa\varepsilon^2 f_t \\
\hline
  &  &  &  &  &  &  &  & \kappa\varepsilon^2 
\end{array}
\right).
\end{align}
To identify the Grothendieck group of a canonical algebra with a canonical bilinear lattice, we have to know which parameters we have to choose in Definition \ref{defProp:CBL} according to the input data for the canonical algebra. We fix a tame bimodule ${}_FM_G$, weights $p_1,\dots,p_t\geq 2$ and exceptional points $\rho_i$ and let $C$ be the corresponding canonical algebra. Let us assume for a moment that $\varepsilon$ as defined in \eqref{eq:Epsilon} equals either $1$ or $2$. We define the numerical invariants
\begin{align}
\nonumber
e_i&=\dim {}_{D_i}U_i = \frac{1}{\varepsilon}\dim {}_{D_i}V_i,\\ \label{eq:efdDefStandard}
f_i&=\dim(V_i)_G = \frac{1}{\varepsilon}\dim (U_i)_F,\\ \nonumber
d_i&=e_if_i,\phantom{\frac{1}{\varepsilon}}
\end{align}
where the equalities in the first two lines follow from \eqref{eq:FGdimensions} and \eqref{eq:PRIdecompositionDimVector}.

\begin{lemma}\label{lemma:K_0(C)isCBL}
The Grothendieck group $\sK_0(C\lmod)$ equipped with the Euler form
\begin{align*}
\langle -,-\rangle:~\sK_0(C\lmod)\times\sK_0(C\lmod)&\longrightarrow \ZZ \\
([X],[Y])&\longmapsto \sum_{m=0}^\infty \dim_k\Ext_C^m(X,Y)
\end{align*}
is - up to a constant factor - isomorphic to a canonical bilinear lattice with symbol \eqref{eq:tubularSymbol}.
\end{lemma}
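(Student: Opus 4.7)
The plan is to identify an explicit $\ZZ$-basis of $\sK_0(C\lmod)$ whose Gram matrix under the Euler form coincides, up to a uniform scalar, with the matrix \eqref{eq:MatrixCanonicalBasis}.

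First I would note that $C$ has finite global dimension: by Theorem \ref{thm:CanonicalTiltingModule}, $C$ is derived equivalent to the squid algebra $A$, which is a one-point extension of the hereditary algebra $A_0$ by a left module and hence has global dimension at most $2$. Thus the Euler form is a well-defined non-degenerate bilinear form on $\sK_0(C\lmod)$, and the classes of the indecomposable projective left $C$-modules
\begin{align*}
[P_F],\quad [P_i(j)]\ \ (1\le i\le t,\ 1\le j\le p_i-1),\quad [P_G]
\end{align*}
form a $\ZZ$-basis. Since $\Ext^m_C(P_v,P_w)=0$ for all $m>0$, the Gram matrix of $\langle-,-\rangle$ in this basis is precisely the $k$-Cartan matrix of $C$, whose entries $\dim_k e_v C e_w$ can be read off directly from the matrix presentation of $C$.

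Next, setting $\phi=\dim_k F$ and combining \eqref{eq:FGdimensions}, \eqref{eq:Epsilon} and \eqref{eq:efdDefStandard} with the short exact sequence \eqref{eq:SESdefV+} defining $V_i^+$ and the duality identity $\dim_k U_i^*=\dim(U_i)_F\cdot\dim_k F$, a short calculation yields
\begin{align*}
\dim_k G=\varepsilon^2\phi,\quad \dim_k D_i=\frac{\varepsilon f_i}{e_i}\phi,\quad \dim_k U_i^*=\varepsilon f_i\phi,\quad \dim_k V_i^+=\varepsilon^2 f_i\phi,\quad \dim_k M=2\varepsilon\phi.
\end{align*}
An entry-by-entry comparison with \eqref{eq:MatrixCanonicalBasis} then shows that the $k$-Cartan matrix of $C$ is exactly $(\phi/\kappa)$ times that matrix. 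Identifying the ordered basis $\{[P_F],[P_i(j)],[P_G]\}$ with the canonical basis $\{a,\,a_i(\bullet),\,w+\varepsilon a\}$ of $V$ (reversing the order within each arm so that the upper-triangular arm blocks align) therefore yields a $\ZZ$-lattice isomorphism $\sK_0(C\lmod)\to V$ intertwining the two bilinear forms up to the uniform scalar $\phi/\kappa$, which is the required isomorphism up to a constant factor.

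The main subtlety is that $\kappa$ depends only on the combinatorial symbol while $\phi$ depends on the base field, so $\phi/\kappa$ is in general a non-trivial positive rational; this is precisely why the lemma asserts the identification only up to a constant factor. A more conceptual version of the basis matching, which one can carry out in parallel, is to trace the reflection functors and the Auslander-Reiten translation used in Section \ref{section:canonicalAlgebras} through $\sK_0$: this recovers the defining formula $a_i(j)=\sum_{l=1}^{j}\tau^{-l}s_i+\varepsilon f_i a$ on the nose from the classes $[P_i(j)]$, and simultaneously confirms that the Coxeter transformations on the two sides correspond.
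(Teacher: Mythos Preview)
Your proof is correct and follows essentially the same approach as the paper: compute the $k$-dimensions of the bimodules $F$, $G$, $D_i$, $U_i^*$, $V_i^+$, $M$ in terms of $\phi=\dim_kF$, observe that these are exactly the entries of the Cartan matrix of $C$ in the basis of indecomposable projectives, and compare entry-by-entry with \eqref{eq:MatrixCanonicalBasis}. The one point the paper adds and you omit is the verification that $\kappa\mid\dim_kF$: from your own formula $\dim_kD_i=(\varepsilon f_i/e_i)\phi\in\ZZ$ one gets $e_i\mid\varepsilon f_i\phi$, hence $e_i/\gcd(e_i,\varepsilon f_i)\mid\phi$ for each $i$, and taking the least common multiple gives $\kappa\mid\phi$. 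Thus the scaling factor $\phi/\kappa$ is in fact a positive \emph{integer}, not merely a positive rational as you state. This is not needed for the lemma as phrased (``up to a constant factor''), but it is worth recording, and your remark about the subtlety is slightly misleading without it.
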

\begin{proof}
First, we note that $\kappa$ defined as in \eqref{eq:d_ikappa} divides $\dim_kF$. For this, it suffices to show that
\begin{align*}
\frac{e_i}{\gcd(e_i,~\varepsilon f_i)}~~\bigg\vert~~ \dim_kF\qquad\qquad & \forall~1\leq i\leq t\\[+5pt]
\Leftrightarrow \qquad e_i~~\big\vert~~\gcd(e_i\dim_kF,~\varepsilon f_i \dim_kF) \qquad\qquad & \forall~1\leq i\leq t,
\end{align*}
which holds true since
\begin{align*}
\varepsilon f_i \dim_kF = \dim (U_i)_F \cdot \dim_kF = \dim_k U_i =  \dim {}_{D_i}U_i \cdot\dim_kD_i = e_i\dim_kD_i
\end{align*}
is a multiple of $e_i$. Furthermore, we compute
\begin{align*}
\dim_kU_i^*&=\dim_kU_i = \dim (U_i)_F\cdot\dim_kF = \varepsilon f_i\dim_kF\\
\dim_kM &= \dim{}_FM\cdot\dim_kF = 2\varepsilon\dim_kF
\end{align*}
\begin{align*}
\dim_kD_i &= \frac{\dim_kU_i}{\dim{}_{D_i}U_i} = \frac{\varepsilon f_i}{e_i}\dim_kF\\
\dim_kV_i^+&=\dim{}_{D_i}V_i^+\cdot\dim_kD_i = \dim{}_{D_i}V_i\cdot\dim_kD_i\\
&=\varepsilon e_i \frac{\varepsilon f_i}{e_i}\dim_kF = \varepsilon^2 f_i \dim_kF\\
\dim_kG &= \varepsilon^2\dim_kF.
\end{align*}
Combining this with the definition of $C$, we see that the Matrix corresponding to the Euler form on $\sK_0(C\lmod)$ in the basis of the classes of projective $C$-modules is a scalar multiple of \eqref{eq:MatrixCanonicalBasis}. Therefore $\sK_0(C\lmod)$ is isomorphic to $V$, up to a constant factor.
\end{proof}

Due to the previous lemma it makes sense to call \eqref{eq:tubularSymbol} the symbol of the canonical algebra $C$ respectively of the squid algebra $A$ or the Coxeter-Dynkin algebra of canonical type $B$. The next goal is to get rid of the constraint $\varepsilon\in\{1,2\}$. It has already been shown by Ringel that the opposite algebra of a canonical algebra is again a canonical algebra for (possibly) different input data \cite[Prop.~1]{Ringel}.

\begin{lemma}
The Grothendieck groups of $C\lmod$ and $\rmod C\cong C^\op\lmod$ are isomorphic as bilinear lattices. In particular, they are isomorphic - up to a constant factor - to a canonical bilinear lattice of the same symbol.\\
Moreover, suppose that $\varepsilon=\frac{1}{2}$, then $\sK_0(C\lmod)$ is a canonical bilinear lattice of symbol
\begin{align*}
\sigma = \left(\begin{array}{c|c}
p_1\, p_2\,\dots \, p_t & \\
d_1\, d_2\,\dots \, d_t & 2\\
f_1\, f_2\,\dots \, f_t & \\
\end{array}\right)
\end{align*}
with
\begin{align}
\nonumber
e_i&=\varepsilon\dim {}_{D_i}U_i = \dim {}_{D_i}V_i,\\ \label{eq:efdDefOpposite}
f_i&=\varepsilon\dim(V_i)_G = \dim (U_i)_F,\\ \nonumber
d_i&=e_if_i.\phantom{\frac{1}{\varepsilon}}
\end{align}
\end{lemma}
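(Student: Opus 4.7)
The plan is to combine two ingredients: Ringel's observation \cite[Prop.~1]{Ringel} that the opposite algebra of a canonical algebra is again canonical, and Lemma \ref{lemma:K_0(C)isCBL}, which we have already established under the assumption $\varepsilon\in\{1,2\}$. Using the identification $\rmod C = C^\op\lmod$, the claim for $\varepsilon=1/2$ will be reduced, via the first assertion of the lemma, to an application of Lemma \ref{lemma:K_0(C)isCBL} to the algebra $C^\op$, for which the dimensional ratio becomes $\varepsilon^\op=1/\varepsilon=2$ and the hypotheses therefore hold.

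For the first assertion I would work in the projective bases $\{[Ce_i]\}$ for $\sK_0(C\lmod)$ and $\{[e_iC]\}$ for $\sK_0(\rmod C)$. Since projective modules have vanishing higher self-extensions, both Euler forms evaluate on these bases to the same matrix
\begin{align*}
\langle[Ce_i],[Ce_j]\rangle_{C\lmod}=\dim_k\Hom_{C\lmod}(Ce_i,Ce_j)=\dim_k(e_jCe_i)=\langle[e_iC],[e_jC]\rangle_{\rmod C}.
\end{align*}
Hence the $\ZZ$-linear map $[Ce_i]\mapsto [e_iC]$ is an isomorphism of bilinear lattices. By Lemma \ref{lemma:K_0(C)isCBL} this common matrix coincides, up to a global factor, with the matrix \eqref{eq:MatrixCanonicalBasis}, so that both Grothendieck groups are isomorphic (up to a constant factor) to a canonical bilinear lattice of the same symbol.

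Now suppose $\varepsilon=1/2$. By \cite[Prop.~1]{Ringel}, $C^\op$ is the canonical algebra associated to the \emph{opposite} input data, obtained by interchanging the roles of $F$ and $G$, replacing ${}_FM_G$ by its $k$-dual viewed as a $G$-$F$-bimodule, and replacing each $\rho_i$ by its dual representation; the dimensional ratio of the new tame bimodule is $\varepsilon^\op = 1/\varepsilon = 2$. Applying Lemma \ref{lemma:K_0(C)isCBL} to $C^\op$ therefore shows that $\sK_0(C^\op\lmod)=\sK_0(\rmod C)$ is, up to a constant factor, a canonical bilinear lattice whose symbol is computed from the opposite data by the formulas \eqref{eq:efdDefStandard} (with $\varepsilon$ replaced by $\varepsilon^\op=2$). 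Combining this with the first assertion, the same is true for $\sK_0(C\lmod)$. Under the dualisation, the rôles of $U_i$ and $V_i$ get interchanged while $D_i$ is replaced by $D_i^\op$, so that
\begin{align*}
e_i^\op=\dim{}_{D_i^\op}(V_i)=\dim{}_{D_i}V_i=\varepsilon\dim{}_{D_i}U_i,\qquad f_i^\op=\dim(U_i)_{F}=\varepsilon\dim(V_i)_G
\end{align*}
by \eqref{eq:dimUTensorV} and \eqref{eq:PRIdecompositionDimVector}, while the weights $p_i$ and the products $d_i=e_if_i$ are unchanged. These are precisely the formulas in \eqref{eq:efdDefOpposite}.

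The main obstacle I expect is the careful identification of the opposite input data: one has to verify that the $k$-dualisation of $\rho_i\colon U_i\otimes_FM\to V_i$ really produces a regular-simple representation of the opposite tame bimodule with endomorphism ring $D_i^\op$, and to track which $F$-, $G$-, and $D_i$-module structures correspond to which under the duality. Once this bookkeeping has been carried out -- essentially following Ringel's construction of the opposite canonical algebra -- the numerical formulas \eqref{eq:efdDefOpposite} fall out of \eqref{eq:efdDefStandard} by direct substitution, completing the proof.
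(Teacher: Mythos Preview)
Your argument for the first assertion contains a genuine error. The identity you write,
\[
\langle[Ce_i],[Ce_j]\rangle_{C\lmod}=\dim_k\Hom_{C\lmod}(Ce_i,Ce_j)=\dim_k(e_jCe_i)=\langle[e_iC],[e_jC]\rangle_{\rmod C},
\]
is wrong in the middle: $\Hom_{C\lmod}(Ce_i,Ce_j)\cong e_iCe_j$, not $e_jCe_i$. Consequently the Euler matrix of $C\lmod$ in the projective basis is the \emph{transpose} of the Euler matrix of $\rmod C$ in its projective basis, and the map $[Ce_i]\mapsto[e_iC]$ is \emph{not} an isomorphism of bilinear lattices. (One can see directly that the matrix \eqref{eq:MatrixCanonicalBasis} is not symmetric.) The statement that $\sK_0(C\lmod)\cong\sK_0(\rmod C)$ as bilinear lattices is therefore not a formality; it is the actual content of the lemma, and requires exhibiting a non-obvious change of basis. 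The paper does exactly this: it writes down a second basis \eqref{eq:oppositeCanonicalBasis} of the abstract canonical bilinear lattice $V$, computes that in this basis the form has matrix \eqref{eq:MatrixCanonicalBasis2ndType}, and then checks that this matrix coincides (up to the scalar $\dim_kF/\kappa$) with the Euler matrix of $C^\op\lmod$ in the projective basis, using the explicit description \eqref{eq:canonicalAlgebraOpposite} of $C^\op$.

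Your strategy for the case $\varepsilon=\tfrac12$ --- pass to $C^\op$, where $\varepsilon^\op=2$, and apply Lemma \ref{lemma:K_0(C)isCBL} --- is exactly what the paper does in the second half of its proof, and your identification of the parameters $e_i^\op,f_i^\op$ is correct. But this step only yields the conclusion for $\sK_0(C\lmod)$ once the isomorphism $\sK_0(C\lmod)\cong\sK_0(C^\op\lmod)$ of bilinear lattices has been established, and that is precisely the part where your argument fails. To repair the proof you must either produce the explicit basis change as the paper does, or give some other reason why a canonical bilinear lattice is isomorphic to the lattice with the transposed form.
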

\begin{proof}
We introduce another basis
\begin{align}\label{eq:oppositeCanonicalBasis}
\varepsilon a-w,~ b_i(j) = \sum_{l=1}^j\tau^{-l}s_i+ f_i(\varepsilon a-w)
\quad \left(\begin{array}{l} 1\leq i\leq t \\ 0\leq j\leq p_i-2\end{array}\right),~
a,
\end{align}
of a bilinear canonical lattice of symbol \eqref{eq:tubularSymbol}. One can check by direct computation that this is indeed a basis of $V$ and that the bilinear form can be described in this basis by the matrix
\begin{align}\label{eq:MatrixCanonicalBasis2ndType}
\left(
\begin{array}{c|ccc|c|ccc|c}
\kappa\varepsilon^2 & \kappa\varepsilon^2 f_1 & \cdots &\kappa\varepsilon^2 f_1 & \cdots & \kappa\varepsilon^2 f_t & \cdots & \kappa\varepsilon^2 f_t & 2\kappa\varepsilon \\
\hline
 & \frac{\kappa\varepsilon f_1}{e_1} & \cdots & \frac{\kappa\varepsilon f_1}{e_1} & 0 & 0 & \cdots & 0 & \kappa\varepsilon f_1 \\
 &  & \ddots & \vdots & \vdots & \vdots &  & \vdots & \vdots \\
 &  &  & \frac{\kappa\varepsilon f_1}{e_1} & 0 &  0 & \cdots & 0 & \kappa\varepsilon f_1 \\
\hline
 &  &  &  & \ddots & 0 & \cdots & 0 & \vdots \\
\hline
 &  &  &  &  & \frac{\kappa\varepsilon f_t}{e_t} & \cdots & \frac{\kappa\varepsilon f_t}{e_t} & \kappa\varepsilon f_t \\
 &  &  &  &  &  & \ddots & \vdots & \vdots \\
 &  &  &  &  &  &  & \frac{\kappa\varepsilon f_t}{e_t} & \kappa\varepsilon f_t \\
\hline
  &  &  &  &  &  &  &  & \kappa 
\end{array}
\right).
\end{align}
The opposite algebra of the canonical algebra can be written as
\begin{align}
\label{eq:canonicalAlgebraOpposite}
C^\op \cong 
\left(
\begin{array}{c|ccc|c|ccc|c}
G^\op & V_1^+ & \cdots & V_1^+ & \cdots & V_t^+ & \cdots & V_t^+ & M \\
\hline
 & D_1^\op & \cdots & D_1^\op & 0 & 0 & \cdots & 0 & U_1^* \\
 &  & \ddots & \vdots & \vdots & \vdots &  & \vdots & \vdots \\
 &  &  & D_1^\op & 0 &  0 & \cdots & 0 & U_1^* \\
\hline
 &  &  &  & \ddots & 0 & \cdots & 0 & \vdots \\
\hline
 &  &  &  &  & D_t^\op & \cdots & D_t^\op & U_t^* \\
 &  &  &  &  &  & \ddots & \vdots & \vdots \\
 &  &  &  &  &  &  & D_t^\op & U_t^* \\
\hline
  &  &  &  &  &  &  &  & F^\op 
\end{array}
\right).
\end{align}
Therefore, the basis \eqref{eq:oppositeCanonicalBasis} corresponds to the classes of indecomposable projective modules in $\sK_0(C^\op\lmod)$. This shows that $\sK_0(C\lmod)\cong\sK_0(C^\op\lmod)$.\\
Now suppose that $\varepsilon=\frac{1}{2}$, then the opposite algebra $C^\op$, which is also a canonical algebra, satisfies $\varepsilon=2$ and we can therefore find the parameters $e_i$ and $f_i$ by comparing \eqref{eq:MatrixCanonicalBasis} and \eqref{eq:canonicalAlgebraOpposite}. More precisely, one can easily see that if we define the parameters as in \eqref{eq:efdDefOpposite}, then taking $k$-dimensions in \eqref{eq:canonicalAlgebraOpposite} yields the matrix \eqref{eq:MatrixCanonicalBasis}.
\end{proof}

\begin{remark}
Note that we have used left modules, since this was more convenient for identifying $\sK_0(C\lmod)$ with a canonical bilinear lattice. By the last theorem we know that passing to right modules (in other words passing to the opposite algebra) does not change the bilinear lattice up to isomorphism. Regarding the squid algebra, it is clear that its opposite algebra is not a squid algebra, but it is derived equivalent to an ordinary squid algebra with (possibly) different input data. The same holds true for Coxeter-Dynkin algebras of canonical type. When extracting the symbol from the minimal model, the weights and the exceptional points, one has to keep in mind that \eqref{eq:efdDefStandard} gives the correct parameters for $\varepsilon=2$ and \eqref{eq:efdDefOpposite} gives the correct parameters for $\varepsilon=\frac{1}{2}$ while for $\varepsilon=1$ the two expressions coincide.
\end{remark}

More generally, it is shown in \cite[Cor.~4.3]{LenzingdelaPena} that the Grothendieck group of any connected finite-dimensional algebra with a so-called \emph{sincere separating exact subcategory} is a canonical bilinear lattice.
From \cite{LenzingKTheoreticStudy} we know that several lattice-theoretic properties of a canonical bilinear lattice depend only on (the sign of) the number
\begin{align*}
\delta=\delta[V] = p\cdot\left(\sum_{i=1}^t{d_i\left(1-\frac{1}{p_i}\right)-\frac{2}{\varepsilon}}\right)
\end{align*}
with $p=\lcm\{p_1,\dots,p_t\}$ which can be computed from the symbol. There are also representation-theoretic analogues involving the representation type of the algebras \cite[Thm.~7.1]{LenzingdelaPena}\cite[§2.7]{Ringel} which we summarise in the table at the end of section \ref{section:RepresentationType}. The canonical bilinear lattice (as well as the squid algebra/Coxeter-Dynkin algebra/canonical algebra) is called \emph{domestic} (resp.~\emph{tubular}, resp.~\emph{wild}) if $\delta<0$ (resp.~$\delta=0$, resp.~$\delta>0$). A list of all domestic and tubular symbols can be found in \cite{LenzingKTheoreticStudy}.

In the following, we will again focus on the Coxeter-Dynkin algebra of canonical type $B$ associated to the same initial data. We write
\begin{align*}
e_F,~ e_1(p_1-1),\dots,e_1(1),\dots\quad\dots,e_t(p_t-1),\dots,e_t(1),~e_G
\end{align*}
for the primitive idempotents in $B$ and
\begin{align*}
S_F=\tp(e_FB),\quad S_i(j)=\tp(e_i(j)B),\quad S_G=\tp(e_GB)=e_GB
\end{align*}
for the corresponding simple right $B$-modules.

\begin{lemma}\label{lemma:EulerFormOctopusSimples}
The non-vanishing values of the Euler form on $\sK_0(\rmod B)$ in the basis of the classes of simple $B$-modules are
\begin{align*}
\setlength{\arraycolsep}{2pt} 
\begin{array}{rclrcl}
\langle [S_F],[S_F]\rangle &=& \dim_k(G),&
\langle [S_F],[S_i(1)]\rangle &=& -\varepsilon f_i \cdot\dim_k(G),\\[+5pt]
\langle [S_F],[S_G]\rangle &=& 2\varepsilon\cdot\dim_k(G),&
\langle [S_i(j)],[S_i(j)]\rangle &=& \frac{\varepsilon f_i}{e_i}\cdot\dim_k(G),\\[+5pt]
\langle [S_i(j+1)],[S_i(j)]\rangle &=& -\frac{\varepsilon f_i}{e_i}\cdot\dim_k(G) & \quad(1\leq j \leq p_i-2),&&\\[+5pt]
\langle [S_i(1)],[S_G]\rangle &=& -\varepsilon^2 f_i \cdot\dim_k(G)&
\langle [S_G],[S_G]\rangle &=& \varepsilon^2\cdot\dim_k(G),
\end{array}
\end{align*}
where $1\leq i\leq t$ and $1\leq j\leq p_i-1$.
\end{lemma}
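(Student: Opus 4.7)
The plan is to compute each value $\langle[S_u],[S_v]\rangle=\sum_n(-1)^n\dim_k\Ext^n_B(S_u,S_v)$ from minimal projective resolutions of the simple right $B$-modules. Because $B$ is derived equivalent to the squid algebra $A$ (Theorems~\ref{thm:tiltSquidToOctopus} and \ref{thm:tiltSquidToOctopus1ptext}) it has finite global dimension, and for the simples at hand the resolutions will turn out to have length at most two, so the computation reduces to a finite list of $\Hom$-spaces between indecomposable projectives, each of which is visible from the matrix defining $B$.

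First I would read off the structure of the indecomposable projectives $e_vB$. Since $e_G$ is the unique sink of the species \eqref{eq:speciesOctopus}, $S_G=e_GB$ is simple-projective. Inspecting the $e_i(j)$-rows of the matrix one verifies $\rad(e_i(j)B)\cong e_i(j-1)B$ for $2\le j\le p_i-1$, while $\rad(e_i(1)B)$ is concentrated at $e_G$ with value $V_i\cong S_G^{\oplus f_i}$ (using $\dim(V_i)_G=f_i$). This at once yields length-one projective resolutions of each $S_i(j)$.

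The key step is the resolution of $S_F$. The radical $\rad(e_FB)$ has components $U_i^\vee$ at each $e_i(1)$ and $W$ at $e_G$; its top is $\bigoplus_iS_i(1)^{\oplus e_i}$ (the $e_G$-component vanishes in the top because the defining surjection $\bigoplus_i U_i^\vee\otimes_{D_i}V_i\twoheadrightarrow W$ is already surjective), so its minimal projective cover is $\bigoplus_i U_i^\vee\otimes_{D_i}e_i(1)B$. This cover is the identity at every $e_i(1)$, so its kernel lives only at $e_G$, where it is the kernel of the cokernel map $\bigoplus_i U_i^\vee\otimes_{D_i}V_i\twoheadrightarrow W$; by the injectivity of $\theta_0$ (condition~(1) of Proposition \ref{Prop:OctopusCondition}) this kernel equals $M$, and since $\dim M_G=2/\varepsilon$ we identify it with $(e_GB)^{\oplus 2/\varepsilon}$ as a right $B$-module. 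With the resulting length-two resolution $0\to(e_GB)^{\oplus 2/\varepsilon}\to\bigoplus_iU_i^\vee\otimes_{D_i}e_i(1)B\to e_FB\to S_F\to 0$ in hand, applying $\Hom_B(-,S_v)$ and using $\Hom_B(e_uB,S_v)=\delta_{u,v}\End(S_v)$ together with $\Hom_B(U_i^\vee\otimes_{D_i}e_i(1)B,S_{i'}(1))\cong\delta_{i,i'}\Hom_{D_i}(U_i^\vee,D_i)\cong\delta_{i,i'}U_i$ shows that the only non-vanishing $\Ext$-groups are $\Ext^0(S_v,S_v)=\End(S_v)$, $\Ext^1(S_F,S_i(1))\cong U_i$, $\Ext^1(S_i(j+1),S_i(j))\cong D_i$ for $1\le j\le p_i-2$, $\Ext^1(S_i(1),S_G)\cong V_i$, and $\Ext^2(S_F,S_G)\cong M$. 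Plugging the $k$-dimensions $\dim_kD_i=\tfrac{\varepsilon f_i}{e_i}\dim_kF$, $\dim_kU_i=\varepsilon f_i\dim_kF$, $\dim_kV_i=f_i\dim_kG$, $\dim_kM=2\varepsilon\dim_kF$ and $\dim_kG=\varepsilon^2\dim_kF$ into the alternating sum will produce the listed values.

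The hard part will be the identification of the second syzygy of $S_F$: it depends both on the correct description of the $e_G$-component of $\rad(e_FB)$ (namely $W$, not $\bigoplus_iU_i^\vee\otimes_{D_i}V_i$) and on the injectivity of $\theta_0$, which is what replaces the kernel of the cokernel map by $M$. Every other formula in the statement reduces to a routine dimension count once the resolutions are written down.
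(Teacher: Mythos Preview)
Your approach is correct and essentially identical to the paper's: both reduce the computation to the single non-obvious ingredient, the length-two projective resolution of $S_F$, and both identify the second syzygy by observing that the cover $\bigoplus_i U_i^\vee\otimes_{D_i}e_i(1)B\to\rad(e_FB)$ is an isomorphism away from the sink and has kernel $M$ at $e_G$ via the defining exact sequence $0\to M\xrightarrow{\theta_0}\bigoplus_i U_i^\vee\otimes_{D_i}V_i\to W\to 0$. The paper writes the second syzygy as the bimodule $M\otimes_G e_GB$ rather than your $(e_GB)^{\oplus 2/\varepsilon}$, but as right $B$-modules these agree, and for the remaining values it simply invokes the dimension computations from Lemma~\ref{lemma:K_0(C)isCBL} and the standard fact that $\Ext^1$ between adjacent simples is the bimodule on the edge, where you spell out each resolution explicitly.
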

\begin{proof}
Most of the equalities immediately follow from the computations in the proof of Lemma \ref{lemma:K_0(C)isCBL}. More precisely, we can use that if two vertices are connected by an edge in the species \eqref{eq:speciesOctopus}, the group $\Ext^1$ between the corresponding simple objects is isomorphic to the bimodule on the edge. The only non-trivial second $\Ext$-group between simple $B$-modules is $\Ext_B^2(S_F,S_G)$ which we compute explicitly hereinafter.
We claim that a projective resolution of $S_F$ is given by
\begin{align}\label{eq:projResS_F}
0\longrightarrow
{}_FM_G\otimes_G e_GB \longrightarrow
\bop_{i=1}^tU_i^\vee \otimes_{D_i}e_i(1)B \stackrel{\pi}{\longrightarrow}
e_FB \longrightarrow
S_F\longrightarrow 0
\end{align}
where $\pi$ corresponds to $(\id_{U_i^\vee})_{1\leq i\leq t}$ under the isomorphism
\begin{align*}
\Hom_B\left( \bop_{i=1}^tU_i^\vee \otimes_{D_i}e_i(1)B~,~ e_FB \right)
\cong \bop_{i=1}^t\Hom_{D_i}(U_i^\vee ~,~ \underbrace{\Hom_B(e_i(1)B,e_FB)}_{\cong U_i^\vee}).
\end{align*}
In order to compute the kernel of $\pi$, we first observe that it is injective at every vertex except at the sink vertex. Hence, $\ker(\pi)$ is supported at the vertex labelled by the division algebra $G$. Restricted to this vertex, the morphism $\pi$ takes the form
\begin{align*}
0\longrightarrow {}_FM_G \stackrel{\theta_0}{\longrightarrow} 
\bop_{i=1}^tU_i^\vee \otimes_{D_i}V_i \stackrel{\pi_0}{\longrightarrow}
{}_FW_G\longrightarrow 0
\end{align*}
which shows that \eqref{eq:projResS_F} is indeed a projective resolution of $S_F$. Applying $\Hom_B(-,S_G)$ to this, yields a stalk complex with the only non-vanishing homology in degree 2, namely
\begin{align*}
\Ext_B^2(S_F,S_G)\cong \Hom_B\left({}_FM_G\otimes_G e_GB, e_GB\right)\cong {}_FM_G.
\end{align*}
This shows that $\langle [S_F],[S_G]\rangle = 2\varepsilon\cdot\dim_k(G)$ and finishes the proof.
\end{proof}

\begin{corollary}\label{cor:radical}
Let $w\in\sK_0(B)=\sK_0(\rmod B)$ be the element
\begin{align*}
w=
\begin{cases}
\varepsilon[S_F]-[S_G], & \text{ if }~~ \varepsilon\in\{1,2\}, \\[+1pt]
[S_F]-2[S_G], & \text{ if }~~ \varepsilon=\frac{1}{2} .
\end{cases}
\end{align*}
Then $\ZZ w$ is a direct summand of the radical of $\sK_0(B)$. Moreover, if $B$ is domestic or wild, then $\rad(\sK_0(B))=\ZZ w$.
\end{corollary}
\begin{proof}
Using Lemma \ref{lemma:EulerFormOctopusSimples}, a direct computation shows that $w$ is contained in the radical. Since the classes of simple $B$-modules form a basis of $\sK_0(B)$, it is also clear that $\ZZ w$ is a direct summand. Moreover, we know that the radical of $\sK_0(B)$ has rank $1$ if $B$ is of domestic or wild type \cite[Prop.~10.3]{LenzingKTheoreticStudy}.
\end{proof}

Building on the previous calculations, we want to change our perspective to describe a link to geometric group theory. We can view finite-dimensional algebras as a source of bilinear forms, looking at the associated (symmetrised) Euler form. Given such a symmetric bilinear form, one can introduce the notion of \emph{(generalised) root systems} and study the corresponding reflection groups (or \emph{Weyl groups}). Since domestic canonical algebras are derived equivalent to tame hereditary algebras \cite[§2.7]{Ringel}, the Weyl groups are affine Coxeter groups. In the case $\delta=0$, the symmetrised Euler form is still positive semidefinite, but has a $2$-dimensional radical. The associated Weyl groups belong to a different interesting class of reflection groups, the so-called \emph{extended affine Weyl group} (or \emph{elliptic Weyl group}). In particular, Saito studied \emph{extended affine root systems} and classified them via Dynkin diagrams. The Dynkin diagrams that show up for the extended affine root systems of codimension $1$ \cite[Table 1]{SaitoI} look very similar to the underlying weighted graphs \eqref{eq:speciesOctopus} of tubular Coxeter-Dynkin algebras, with dotted edges in Saito's Dynkin diagrams indicating relations in the algebra. Comparing \cite[§8.2]{SaitoI} with Lemma \ref{lemma:EulerFormOctopusSimples}, this connection becomes even clearer, namely the basis consisting of the classes of simple $B$-modules corresponds to the basis of simple roots associated to the vertices of Saito's Dynkin diagrams.

\section{Representation type}\label{section:RepresentationType}

Our next goal is to study the representation type of Coxeter-Dynkin algebras of canonical type. It is clearly of infinite (resp.~wild) representation type for $\delta\geq 0$ (resp.~$\delta>0)$ since it is a one-point-extension of a hereditary algebra of infinite (resp.~wild) representation type. In the domestic case, we have already seen in Example \ref{example:complexNumbers} that $B$ can be representation-finite. Hereinafter we will show that this is a general phenomenon. Since a domestic Coxeter-Dynkin algebra is always derived equivalent to a tame hereditary algebra, the following result of Ringel about tilted algebras will turn out to be useful.

\begin{theorem}[{\cite[§4.2(8)]{RingelTameAlgebras}}] \label{thm:RingelRepFiniteTiltedAlgebra} 
Let $H$ be a finite-dimensional tame hereditary $k$-algebra and $T\in \rmod H$ a tilting module, then the following statements are equivalent:
\begin{enumerate}
\item[(1)] $T$ contains a non-zero preprojective direct summand and a non-zero preinjective direct summand.
\item[(2)] $\End_H(T)$ is of finite representation type.
\end{enumerate}
\end{theorem}

However, we do not know yet whether the domestic Coxeter-Dynkin algebra can be obtained as the endomorphism algebra of a tilting \emph{module} over a tame hereditary algebra. Therefore, we have to take a closer look at the derived equivalence between them.

\begin{theorem}\label{thm:CDAlgebraRepFinite}
Let $B$ be a Coxeter-Dynkin algebra of canonical type, then B is representa-tion-finite if and only if it is domestic. Moreover, in this case B is a tilted algebra, meaning there exists a tilting module $T$ over a (tame) hereditary algebra $H$ such that $B\cong\End_H(T)$.
\end{theorem}
\begin{proof}
Note that one implication is clear according to the previous observations. Hence let $B$ be a domestic Coxeter-Dynkin algebra of canonical type. Combining Theorem \ref{thm:tiltSquidToOctopus} and \ref{thm:CanonicalTiltingModule} with \cite[Prop.~2.7]{Ringel}, we can deduce that there exists a tame hereditary algebra $H$ such that $\kD^b(\rmod H)\cong\kD^b(\rmod B)$ as triangulated categories. Recall that the module category of $H$ splits into three parts as
\begin{align*}
\rmod H = \kP \vee \kR \vee \kI,
\end{align*}
where $\kP$ is the preprojective component, $\kR$ are the regular components and $\kI$ is the preinjective component (see for instance \cite{DlabRingel}). Here, non-zero morphisms exist only from left to right. Since $H$ is hereditary, all indecomposable objects of the bounded derived category are stalk complexes. In order to prove the theorem we will show the existence of a tame hereditary algebra $H$ and a tilting complex\footnote{As the notation suggests, here we already associate the indecomposable summands of $T$ with indecomposable, projective $B$-modules resp. with vertices in the species \eqref{eq:speciesOctopus}.}
\begin{align*}
T=T_G~\oplus~ \bigoplus_{i=1}^t\bigoplus_{j=1}^{p_i-1}~ T_i(j)~ \oplus T_F \quad\in \kD^b(\rmod H)
\end{align*}
which satisfies the following properties:
\begin{enumerate}
\item[(i)] $T_G$ is the unique simple, projective $H$-module,
\item[(ii)] $T_i(j)$ lies in a regular component for all $1\leq i\leq t$ and $1\leq j\leq p_i-1$,
\item[(iii)] $T_F$ is a preinjective $H$-module.
\end{enumerate}
In particular, $T$ is even a tilting module. One of the main ingredients for this is that we have a \emph{rank function}
\begin{align*}
\rk=\rk_w:\quad \sK_0(H)\longrightarrow \ZZ,\qquad x\longmapsto \langle x,w \rangle
\end{align*}
which is a group homomorphism that depends on the choice of a generator $w$ of the radical of $\sK_0(H)$.
Up to a constant factor, this coincides with the rank function on a canonical bilinear lattice, introduced in \cite{LenzingKTheoreticStudy}. Note that one often also calls $\rk([X])$ the \emph{defect} of an $H$-module $X$ and that it provides information about whether $X$ is preprojective, regular or preinjective (see for instance \cite{DlabRingel}). For us, it is important that it is zero on classes of regular modules, while it changes the sign when passing from preprojective to preinjective modules. This carries over to the bounded derived category as depicted in Figure \ref{fig:derivedCategoryDomesticCase}.
\begin{figure}[ht]
\centering
\begin{tikzpicture}
\draw (0.5,0) -- (3,0);
\draw (1.5,0) -- (1,1.5);
\draw (1.25,2) -- (1,1.5);
\draw (0.5,2) -- (3,2);
\draw (4.5,-0.5) ellipse (1cm and 0.5cm);
\draw (4.5,2.5) ellipse (1cm and 0.5cm);
\draw (3.5,-0.5) -- (3.5,2.5);
\draw (5.5,-0.5) -- (5.5,2.5);
\draw (6,0) -- (9.5,0);
\draw (8,0) -- (7.5,1.5);
\draw (7.75,2) -- (7.5,1.5);
\draw (6,2) -- (9.5,2);
\draw (11,-0.5) ellipse (1cm and 0.5cm);
\draw (11,2.5) ellipse (1cm and 0.5cm);
\draw (10,-0.5) -- (10,2.5);
\draw (12,-0.5) -- (12,2.5);
\draw (12.5,0) -- (15,0);
\draw (14.5,0) -- (14,1.5);
\draw (14.25,2) -- (14,1.5);
\draw (12.5,2) -- (15,2);
\node at (4.7,-1.5) {$\underbrace{\hspace{6.4cm}}_{\rmod H}$};
\node at (11.2,-1.522) {$\underbrace{\hspace{6.4cm}}_{(\rmod H)[1]}$};
\node at (2.25,-0.7) {$\underbrace{\hspace{1.6cm}}_{\kP}$};
\node at (6.95,-0.7) {$\underbrace{\hspace{1.9cm}}_{\kI}$};
\node at (4.5,-0.8) {\footnotesize $\kR$};
\node at (1.75,3.5) {$\overbrace{\hspace{2.5cm}}^{\rk < 0}$};
\node at (4.5,3.5) {$\overbrace{\hspace{2cm}}^{\rk=0}$};
\node at (7.75,3.5) {$\overbrace{\hspace{3.5cm}}^{\rk > 0}$};
\node at (11,3.5) {$\overbrace{\hspace{2cm}}^{\rk=0}$};
\node at (13.75,3.5) {$\overbrace{\hspace{2.5cm}}^{\rk < 0}$};
\node at (1.2,1.45) {$\bullet$};
\node at (1.45,1.2) {\tiny $T_G$};
\node at (7.2,0.9) {$\bullet$};
\node at (7.3,0.65) {\tiny $T_F$};
\node at (3.7,0.5) {$\bullet$};
\node at (3.9,0.5) {$\bullet$};
\node at (3.7,0.85) {$\bullet$};
\node at (3.7,1.2) {$\bullet$};
\node at (3.9,1.2) {$\bullet$};
\node at (4.1,1.2) {$\bullet$};
\node at (4.3,1.2) {$\bullet$};
\node at (4.5,0.77) {\tiny $\bigoplus T_i(j)$};
\end{tikzpicture}
\caption{The category $\kD^b(H)\cong\kD^b(B)$ and the signs of the rank function applied to classes of indecomposable objects (the dots represent indecomposable direct summands of a tilting object with $\End(T)\cong B$).}
\label{fig:derivedCategoryDomesticCase}
\end{figure}
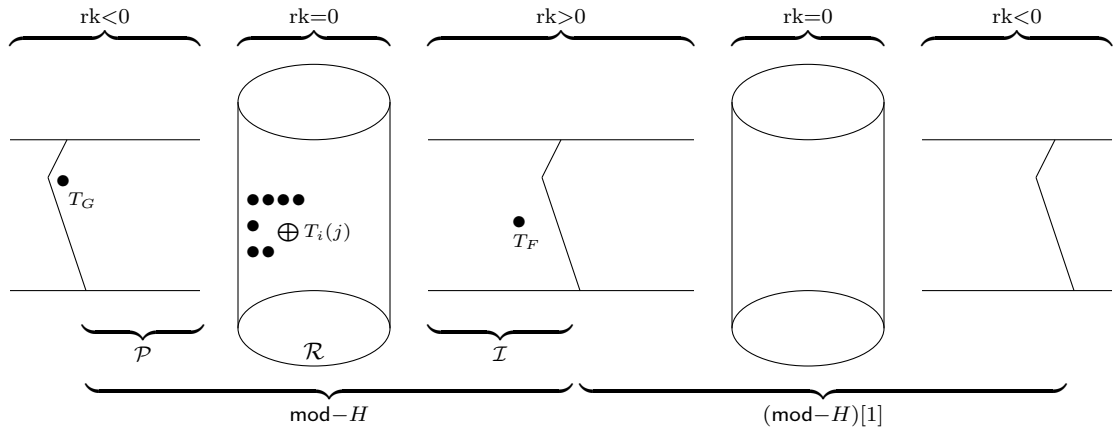
\\ We can now start with the construction of a tilting object with the desired properties.
\\ \underline{Step 1:}\quad Using Corollary \ref{cor:radical}, we can express the radical of $\sK_0(B)\cong\sK_0(H)$ in terms of the classes of simple $B$-modules. Hence, the rank function $\rk=\rk_w$ associated to the radical generator $w$ from Corollary \ref{cor:radical} satisfies
\begin{align*}
\rk([T_G])<0,\qquad \rk([T_i(j)])=0\quad\forall 1\leq i\leq t,~1\leq j\leq p_i-1,\qquad \rk([T_F])>0.
\end{align*}
In particular, the object $T_G$ lies in some (shifted) preprojective or preinjective component, but not in a regular component. By applying a suitable shift, we can assume that $T_G\in \kI[-1]\vee\kP$. Moreover, we can apply derived Auslander-Reiten translations to turn it into a projective $H$-module (sitting in degree 0). In order to achieve that $T_G$ is the unique simple, projective $H$-module we possibly have to modify the hereditary algebra $H$. More precisely, we can apply derived reflection functors and replace $H$ by a reflected tame hereditary algebra (in the sense of \cite[§2]{DlabRingel}). This proves that we can always find a tame hereditary algebra $H$ and a tilting complex $T$ such that condition (i) holds.\\
\\ \underline{Step 2:}\quad Let us fix some $1\leq i\leq t$ and $1\leq j\leq p_i-1$. Since $\rk([T_i(j)])=0$, we know that $T_i(j)$ lies in some (shifted) regular component. Hence, there exist a regular $H$-module $M\in\kR$ and an integer $n\in \ZZ$ such that $T_i(j)\cong M[n]$. Using projectivity of $T_G$ and
\begin{align*}
0\neq \Hom_B(e_GB,e_i(j)B)\cong\Hom_{\kD^b(H)}(T_G,T_i(j))\cong \Ext_H^n(T_G, M),
\end{align*}
it follows that $n=0$ and therefore $T_i(j)$ is concentrated in degree zero. This proves that property (ii) is satisfied.\\
\\ \underline{Step 3:}\quad Since $\rk([T_F])$ is positive, the object $T_F$ must lie in $\kI\vee\kP[1]$ (higher shifts are not possible, because $\Hom_{\kD^b(H)}(T_i(1),T_F)\neq 0$ and $H$ is hereditary). In order to verify that $T_F\in \kI$, we distinguish two cases:
\\ \underline{Case 1: $\varepsilon(d_1+\dots+d_t)\geq 3$} \quad In this case, the map $\theta_0$ from Proposition \ref{Prop:OctopusCondition} cannot be surjective which is why
\begin{align*}
0\neq W \cong \Hom_B(e_GB,e_FB)\cong\Hom_{\kD^b(H)}(T_G,T_F)
\end{align*}
and $T_F$ is concentrated in degree zero by the same argument as in step 2.
\\ \underline{Case 2: $\varepsilon(d_1+\dots+d_t)=2$} \quad This covers precisely the extended Dynkin types $\widetilde{A}_n$, $\widetilde{B}_n$, $\widetilde{C}_n$ and $\widetilde{BC}_n$. In these cases, since $T_G$ is the unique simple, projective $H$-module, every preprojective $H$-module $X$ satisfies $\Hom_H(T_G,X)\neq 0$. Hence, if $T_F$ was isomorphic to $X[1]$ for some preprojective $H$-module $X$, this would lead to 
\begin{align*}
0\neq\Hom_H(T_G,X)\cong\Hom_{\kD^b(H)}(T_G,T_F[-1])
\end{align*}
contradicting the fact that $T$ is a tilting object in $\kD^b(\rmod H)$. Therefore, $T_F$ is concentrated in degree 0 which proves condition (iii) also in the second case.\\
Summing up, we proved the existence of a tilting \emph{module} $T$ over $H$ such that $\End_H(T)\cong B$ and which has a preprojective and a preinjective direct summand. Hence $B$ is of finite representation type by Theorem \ref{thm:RingelRepFiniteTiltedAlgebra}.
\end{proof}

\begin{remark}
In the proof of the previous theorem we had to argue that the algebra $B$ (which was known to be derived equivalent to a hereditary algebra) is actually a \emph{tilted algebra}, meaning it arises as the endomorphism algebra of a tilting module over a hereditary algebra. We want to point out that this implication fails in general. An example of such an algebra would be the path algebra of the quiver
\begin{align*}
\xymatrix{
\bullet \ar[r]^a & \bullet \ar@<0.5ex>[r]^{b_1} \ar@<-0.5ex>[r]_{b_2} & \bullet \ar[r]^c & \bullet
}
\end{align*}
modulo the relations $ab_1=0=b_2c$. It is derived equivalent to a tame hereditary algebra of extended Dynkin type $\widetilde{A}_{3}$ (more precisely to a canonical algebra with weight sequence $(p_1,p_2)=(2,2)$). However, it cannot be realised as the endomorphism algebra of a tilting module over a tame hereditary algebra. Any such tilting object contains direct summands from different shifts of regular components. More examples and a detailed study of such \emph{quasi-tilted algebras} can be found for instance in \cite{LenzingSkowronskiQuasiTilted}.
\end{remark}

\begin{remark}\label{rmk:RepFiniteTiltedAlgebras}
Theorem \ref{thm:CDAlgebraRepFinite} also provides another perspective on Proposition \ref{Prop:OctopusCondition} and the question why we need an additional condition to be satisfied when defining the Coxeter-Dynkin algebra of canonical type. Suppose we have a domestic input datum, meaning a tame bimodule, weights and exceptional points such that $\delta<0$. In this case, we know that the squid algebra $A$ and the canonical algebra $C$ are derived equivalent to a tame hereditary algebra $H$. The condition of Proposition \ref{Prop:OctopusCondition} fails if and only if the oriented valued graph associated to $H$ is of the form 
\begin{align}\label{eq:valuedGraphAnTilde}
\begin{minipage}{12cm}
\xymatrix@R=1pc{
 & 2 \ar[r] & \dots & \dots \ar[r] & n \ar[dr] & \\
 1 \ar[ur] \ar[rrrrr] & & & & & n+1
}
\end{minipage}
\end{align}
where -- as usual -- the valuation $(1,1)$ at every edge is omitted in the diagram.
For a detailed study of such tame hereditary algebras we refer to \cite{DlabRingelTameHereditary}. If the base field $k$ is perfect, then $H$ is necessarily the tensor algebra of a $k$-modulation of \eqref{eq:valuedGraphAnTilde}. Otherwise it can also be of the form $\widetilde{A}_n(\varphi,\delta)$ (as defined in \cite{DlabRingelTameHereditary}).
In both cases, we claim that $\rmod H$ contains no tilting module with representation-finite endomorphism algebra. In regard of Theorem \ref{thm:RingelRepFiniteTiltedAlgebra}, this follows from the observation that an indecomposable, preprojective $H$-module $P$ and an indecomposable, preinjective $H$-module $I$ cannot be $\Ext$-orthogonal, meaning $P\oplus I$ is never rigid. First, by a similar argument as in the proof of Theorem \ref{thm:CDAlgebraRepFinite}, we can assume without loss of generality that $P$ is simple and projective and therefore has dimension vector $(0,\dots,0,1)$.
Since $\Hom_H(I,P)=0$, the combinatorial description of the Euler form yields
\begin{align*}
\dim\Ext_H^1(I,P)=-\langle[I],[P]\rangle
= \underbrace{-(\underline{\dim}~I)_{n+1}+(\underline{\dim}~I)_1}_{>0~~\text{since }I\text{ is preinjective}}+(\underline{\dim}~I)_n >0.
\end{align*} 
Summing up, we have shown that in the domestic case the condition from Proposition \ref{Prop:OctopusCondition} is equivalent to the existence of a representation-finite tilted algebra of the respective extended Dynkin type. Moreover, if the condition holds then the Coxeter-Dynkin algebra of canonical type is such a representation-finite tilted algebra.
\end{remark}

Finally, let us summarise some properties of canonical bilinear lattices, associated reflection groups and the signature of the \emph{symmetrised} Euler form as well as the representation type of the corresponding algebras:
\begin{align*}
\renewcommand{\arraystretch}{1.5} 
\begin{array}{|c|c|c|c|c|c|}
\hline
\begin{minipage}{1.7cm} \centering type \end{minipage}
& \begin{minipage}{0.7cm} \centering $\delta$ \end{minipage}
& \begin{minipage}{2cm} \vspace{4pt} \centering signature \\ $(+,0,-)$ \vspace{4pt} \end{minipage}
& \text{Weyl group}
& \begin{minipage}{2cm} \centering canonical algebra \end{minipage}
& \begin{minipage}{3cm} \centering Coxeter-Dynkin algebra \end{minipage}
\\
\hline\hline
\begin{minipage}{1.7cm} \centering domestic \end{minipage}
& \begin{minipage}{0.7cm} \centering $<0$ \end{minipage}
& (n-1,1,0)
& \begin{minipage}{2.5cm} \vspace{4pt}\centering affine \\ Coxeter group \vspace{4pt} \end{minipage}
& \begin{minipage}{2cm} \centering tame concealed \end{minipage}
& \begin{minipage}{3cm} \centering representation- \\ finite \end{minipage}
\\
\hline
\begin{minipage}{1.7cm} \centering tubular \end{minipage}
& \begin{minipage}{0.7cm} \centering $=0$ \end{minipage}
& (n-2,2,0)
& \begin{minipage}{2.5cm} \vspace{4pt} \centering elliptic \\ Weyl group \vspace{4pt} \end{minipage}
& \begin{minipage}{2cm} \centering tame \end{minipage}
& \begin{minipage}{3cm} \centering tame \end{minipage}
\\
\hline
\begin{minipage}{1.7cm} \centering wild \end{minipage}
& \begin{minipage}{0.7cm} \centering $>0$ \end{minipage}
& (n-2,1,1)
& \begin{minipage}{2.5cm} \vspace{4pt} \centering cuspidal \\ Weyl group \vspace{4pt} \end{minipage}
& \begin{minipage}{2cm} \centering wild \end{minipage} 
& \begin{minipage}{3cm} \centering wild \end{minipage}
\\
\hline
\end{array}
\end{align*}

Concerning the representation type, we recall that a finite-dimensional algebra is called \emph{tame concealed} if it arises as the endomorphism algebra of a preprojective tilting module over a tame hereditary algebra \cite[§4.3]{RingelTameAlgebras}. Canonical algebras of domestic type are tame concealed by \cite[§2.7]{Ringel}. We note that domestic squid algebras are not tame concealed since a corresponding tilting module over a tame hereditary algebra always contains a direct summand which is not preprojective but lies in a tube.


\end{document}